\DeclareFontFamily{OT1}{rsfs}{}
\DeclareFontShape{OT1}{rsfs}{n}{it}{<-> rsfs10}{}
\DeclareMathAlphabet{\mathscr}{OT1}{rsfs}{n}{it}
\newcommand{\triv}{\text{triv}}
\newtheorem{theorem}{Theorem}
\newtheorem{corollary}[theorem]{Corollary}
\newtheorem{lemma}[theorem]{Lemma}
\newtheorem{proposition}[theorem]{Proposition}
\newtheorem{definition}[theorem]{Definition}
\newtheorem{remark}[theorem]{Remark}
\newtheorem{example}[theorem]{Example}
\numberwithin{theorem}{section}
\DeclareMathOperator{\sgn}{sgn}
\DeclareMathOperator{\Succ}{Succ}
\DeclareMathOperator{\Pred}{Pred}
\DeclareMathOperator{\Sep}{Sep}
\begin{document}

\title [Parameters for Generalized Hecke Algebras in Type $B$]{Parameters for Generalized Hecke Algebras in Type $B$} \author{Max Murin and Seth Shelley-Abrahamson}\date{\today}\maketitle

\begin{abstract}  The irreducible representations of full support in the rational Cherednik category $\mathcal{O}_c(W)$ attached to a Coxeter group $W$ are in bijection with the irreducible representations of an associated Iwahori-Hecke algebra. Recent work has shown that the irreducible representations in $\mathcal{O}_c(W)$ of arbitrary given support are similarly governed by certain generalized Hecke algebras.  In this paper we compute the parameters for these generalized Hecke algebras in the remaining previously unknown cases, corresponding to the parabolic subgroup $B_n \times S_k$ in $B_{n+k}$ for $k \geq 2$ and $n \geq 0$.\end{abstract}

\tableofcontents

\section{Introduction}

Let $W$ be a finite Coxeter group with complex reflection representation $\mathfrak{h}$.  Etingof and Ginzburg \cite{EG} defined for each such pair $(W, \mathfrak{h})$ a family of infinite-dimensional associative algebras $H_c(W, \mathfrak{h})$, the \emph{rational Cherednik algebras}, depending on a parameter $c$ and deforming the algebra $\mathbb{C}W \ltimes D(\mathfrak{h})$, where $D(\mathfrak{h})$ is the algebra of polynomial differential operators on $\mathfrak{h}$.  Since their introduction, rational Cherednik algebras and their representation theory have received much attention due to their connections with many other topics in mathematics, including, for example, Hilbert schemes \cite{GS1, GS2}, torus knots \cite{EGL}, quantum integrable systems \cite{F}, categorification \cite{Shan, SV}, and the representation theory of more classical algebras such as Iwahori-Hecke algebras.

Of particular interest to representation theorists is the category $\mathcal{O}_c = \mathcal{O}_c(W, \mathfrak{h})$ \cite{GGOR} of certain admissible representations of $H_c(W, \mathfrak{h})$.  The category $\mathcal{O}_c$ may be viewed as an analogue of the classical Bernstein-Gelfand-Gelfand category $\mathcal{O}$ of representations of a semisimple complex Lie algebra $\mathfrak{g}$.  In parallel with the Verma modules appearing in the classical categories $\mathcal{O}$, to each irreducible representation $\lambda$ of $W$ there is a \emph{standard module} $\Delta_c(\lambda)$ in $\mathcal{O}_c$, and each standard module $\Delta_c(\lambda)$ has a unique irreducible quotient, denoted $L_c(\lambda)$.  This association $\lambda \mapsto L_c(\lambda)$ determines a bijection $\text{Irr}(W) \rightarrow \text{Irr}(\mathcal{O}_c)$ between the irreducible representations of the Coxeter group $W$ and the irreducible representations in the category $\mathcal{O}_c$.  In fact, for almost all values of the parameter $c$ this association defines an equivalence of categories, while at certain special values of $c$ the category $\mathcal{O}_c$ becomes non-semisimple.  In this sense, the family of categories $\mathcal{O}_c$ depending on $c$ deforms the category of finite-dimensional complex representations of $W$ in an interesting way.

Each representation $M$ of $H_c(W, \mathfrak{h})$ in the category $\mathcal{O}_c$ has an associated \emph{support} $\text{Supp(M)}$, a closed subvariety of the vector space $\mathfrak{h}$.  In a sense, $\text{Supp}(M)$ measures the size of $M$.  For example, at one extreme the condition $\text{Supp}(M) = \{0\}$ is equivalent to the finite-dimensionality of $M$.  At the other extreme, the condition $\text{Supp}(M) = \mathfrak{h}$, i.e. that $M$ has \emph{full support}, means that $M$ is as large as possible, i.e. that the Hilbert polynomial of $M$ has degree equal to the dimension of $\mathfrak{h}$.  It is a fundamental question about the category $\mathcal{O}_c$ to ask how many irreducible representations have a given support $X \subset \mathfrak{h}$; in the case $X = \{0\}$ this is precisely asking for the number of finite-dimensional irreducible representations of $H_c(W, \mathfrak{h})$.  The case $X = \mathfrak{h}$ was treated in \cite{GGOR}, in which it was shown that the set of irreducible representations in $\mathcal{O}_c$ of full support are naturally in bijection with the set of irreducible representations of the (better understood) Iwahori-Hecke algebra $H_q(W)$ of $W$ at a parameter $q$ depending on $c$ in an exponential manner.

More recently, Losev and the second author \cite{LS} extended this result from \cite{GGOR} to allow for the counting of irreducible representations with any given support.  In particular, it is shown in \cite[Theorem 4.2.14]{LS} that the representations in $\mathcal{O}_c$ of any given support are governed, in an appropriate sense, by generalized Hecke algebras of the form $G_1 \ltimes H_q(G_2)$.  Here, $G_2$ is a finite Coxeter group, $H_q(G_2)$ is the Hecke algebra of $G_2$ at a particular parameter $q$, and $G_1$ is a finite group acting on $H_q(G_2)$ by diagram automorphisms.  In particular, the counting problem is reduced to the determination of these groups $G_1, G_2$ and the parameter $q$ in all cases.  This situation is in complete analogy with the Harish-Chandra series and generalized Hecke algebras appearing in the representation theory of finite groups of Lie type.

The groups $G_1$ and $G_2$ are easily determined by the support $X \subset \mathfrak{h}$ in question, but the parameter $q$ is much subtler.  In \cite[Theorem 4.2.11, Remark 4.2.12]{LS} a procedure is given for computing the parameter $q$, although computationally this procedure is quite complicated.  In particular, there is a parameter $q(W_1, W_2, c, \lambda) \in \mathbb{C}^\times$ associated to each tuple $(W_1, W_2, c, \lambda)$ such that
\begin{enumerate}
\item $W_2$ is an irreducible finite Coxeter group \item $W_1 \subset W_2$ is a standard parabolic subgroup of rank one less than the rank of $W_2$ \item $W_1 \subsetneq N_{W_2}(W_1)$, where $N_{W_2}(W_1)$ denotes the normalizer of $W_1$ in $W_2$ \item $c$ is a complex-valued class function on the set of reflections in $W_2$ \item $\lambda$ is an irreducible representation of $W_1$ such that $\dim_\mathbb{C} L_c(\lambda) < \infty.$
\end{enumerate}
Given a tuple $(W_1, W_2, c, \lambda)$ satisfying (1)-(5) as above, computing $q(W_1, W_2, c, \lambda)$ by the procedure in \cite[Theorem 4.2.11]{LS} amounts to computing a complicated element of the Hecke algebra $H_{q_{W_1}}(W_1)$, where the parameter $q_{W_1}$ is given by $q_{W_1}(s) = e^{-2 \pi i c(s)}$, and taking its character value in an appropriate representation depending on $\lambda$.  The parameters of the Hecke algebra $H_q(G_2)$ are given by the numbers $q(W_1, W_2, c, \lambda)$ for appropriate values of $W_1, W_2, c, \lambda$.  These parameters $q(W_1, W_2, c, \lambda)$ were computed in \cite{LS} in all cases except when $(W_1, W_2) = (B_n \times S_k, B_{n + k})$, where $n, k$ are integers satisfying $n \geq 0$ and $k \geq 2$.

In this paper, we compute the parameters $q(W_1, W_2, c, \lambda)$ for the generalized Hecke algebras in these remaining previously unknown cases appearing in type $B$.  In particular, we show that $q(B_n \times S_k, B_{n + k}, c, \lambda) = -(-e^{-2\pi i c(s)})^k$, where $s \in B_{n}$ is a reflection through any of the coordinate axes in $\mathbb{C}^{n}$.  Remarkably, we find that this parameter does not depend on $n$ or on the representation $\lambda \in \text{Irr}(B_n \times S_k)$, extending a pattern observed in \cite[Remark 4.2.15]{LS} in other cases.  We prove this result by induction on $n$, in which the base case $n = 0$ is established by producing a combinatorial formula for the square of a certain element of the Hecke algebra $H_{p, q}(B_n)$ in the standard $T_w$-basis.

\subsection{Acknowledgments}

This work was conducted as part of the 2017 MIT Summer Program in Undergraduate Research.  We would like to thank David Jerison and Ankur Moitra for numerous useful discussions.  We also thank the creators of the CHEVIE package for GAP 3 (\cite{Mi}, \cite{MHLMP}), which was extremely helpful in discovering several of our results and verifying the correctness of this paper.

\section{Background and Definitions}\label{sec:background}

\subsection{Finite Coxeter Groups}

\begin{definition}
Let $S$ be a finite set, and, for each pair of distinct elements $s, t \in S$, let $m_{s,t} = m_{t,s} \geq 2$ be a positive integer or $\infty$.  Let $W$ be the group generated by $S$ with the following relations:

\begin{enumerate}
	\item $s^2 = 1$ for each $s \in S$
	\item $\underbrace{sts\cdots }_{m_{s,t}} = \underbrace{tst\cdots }_{m_{s,t}}$ for each pair of distinct elements $s, t \in S$ such that $m_{st} < \infty$.
\end{enumerate}

A group $W$ with such a presentation is called a \emph{Coxeter group}, and the tuple $(W, S)$ is called a \emph{Coxeter system}. Given a Coxeter system $(W, S)$, the set $S$ is called the set of \emph{simple reflections}.
\end{definition}

Throughout this paper, we will only consider finite Coxeter groups, i.e. the case when $|W| < \infty$.

Attached to any Coxeter system is the corresponding length function.

\begin{definition}
Let $(W, S)$ be a Coxeter system. Then, for any $w \in W$, the \emph{length} of $w$, denoted $\ell(w)$, is the length of the shortest sequence $(s_{i_1}, \ldots, s_{i_r})$ of simple reflections such that $s_{i_1}\cdots s_{i_r} = w$. A product of simple reflections equaling $w$ of length $\ell(w)$ is called a \emph{reduced expression}.
\end{definition}

Note that if $s_{i_1}\cdots s_{i_k}$ is a reduced expression, then any contiguous subword $s_{i_l}s_{i_{l+1}}\cdots s_{i_m}$ is also a reduced expression. Note also that if $w = s_{i_1}\cdots s_{i_k}$, then $w^{-1} = s_{i_k}\cdots s_{i_1}$, and thus $\ell(w^{-1}) = \ell(w)$ for any $w$.

A finite Coxeter group $W$ has a unique element of maximal length, denoted $w_0$ \cite[1.5.1]{GP}. For any element $w \in W$, $\ell(ww_0) = \ell(w_0w) = \ell(w_0) - \ell(w)$. This implies that $\ell(w_0^2) = \ell(w_0) - \ell(w_0) = 0$, and thus $w_0^2 = 1$.

Given a Coxeter group $W$ with simple reflections $S$ and a subset $S' \subset S$, then the (standard) \emph{parabolic subgroup} $W'$ of $W$ generated by $S'$ is the group generated by the elements of $S'$. The pair $(W', S')$ is a Coxeter system with $m'_{s,t} = m_{s, t}$ for $s, t \in S'$.  For any element $w \in W'$, any reduced expression for $w$ contains only elements of $W'$. Thus, the length function on $W'$ is the restriction of the length function on $W$ to $W'$ \cite[1.2.10]{GP}.

Given a Coxeter system $(W, S)$ and a parabolic subgroup $W'$, there are distinguished representatives of the $W'$-cosets in $W$:

\begin{definition}
Let $W'$ be a parabolic subgroup of $W$. Then, for each $w \in W$ there exists a unique element $x$ of minimal length in the coset $W'w$ \cite[2.1.1]{GP}. This is the \emph{distinguished right coset representative} of $W'w$. Let $X_{W'} \subset W$ be the set of such representatives.
\end{definition}

For any $w' \in W'$ and $x \in X_{W'}$, we have that $\ell(w'x) = \ell(w') + \ell(x)$.  The set of distinguished left coset representatives is defined similarly, and $w \in W$ is a distinguished right coset representative for $W'$ if and only if $w^{-1}$ is a distinguished left coset representative for $W'$.

\begin{example}
Let $S_k$, for any $k \geq 2$, be the $k^{th}$ symmetric group, i.e., the group of permutations of the set $\{1, \ldots, k\}$. This group is realized as a Coxeter group with simple reflections $s_i = (i, i+1)$ for $1 \leq i \leq k-1$ and with $m_{s_i, s_j} = 2$ if $|i - j| > 1$, and $m_{s_i, s_{i+1}} = 3$. The length of an element $w \in S_k$ is equal to the number of inversions, $|\{1 \leq i < j \leq k \mid w(i) > w(j)\}|$. The longest element in this group is the permutation $i \mapsto k+1-i$.
\end{example}

\begin{example}
Let $B_n$, for any $n \geq 1$, be the group of permutations $w$ of $\{\pm 1, \ldots, \pm n\}$ such that $w(-i) = -w(i)$ for each $i \in \{\pm 1, \ldots, \pm n\}$. Then, $B_n$ is a Coxeter group, with simple reflections $t = (-1, 1)$ and $s_i = (-i, -(i+1))(i, i+1)$ for each $1 \leq i \leq k-1$. We have $m_{t, s_1} = 4$, $m_{t, s_i} = 2$ for each $2 \leq i \leq k-1$, $m_{s_i, s_{i+1}} = 3$ for each $1 \leq i \leq n-2$, and $m_{s_i, s_j} = 2$ if $1 \leq i, j \leq n-1$ and $|i-j| > 1$.
\end{example}

For $x$ a nonzero integer, let $\sgn(x)$ be $1$ if $x > 0$ and $-1$ if $x < 0$.  The following lemma, giving an interpretation of the length of an element $w \in B_n$ in terms of the associated signed permutation, is standard:

\begin{lemma}\label{lem:b-length}
With respect to the choice of simple reflections above, the length of an element $w \in B_n$ is given by
\[
\ell(w) = |\{(i, j) \mid 1 \leq i < j \leq n, w(i) > w(j)\}| + \sum_{\substack{1 \leq i \leq n \\ w(i) < 0}} |w(i)|.
\] 
\end{lemma}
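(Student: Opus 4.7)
Write $L(w)$ for the right-hand side of the claimed identity, decomposed as $L(w) = \mathrm{inv}(w) + \mathrm{neg}(w)$, where $\mathrm{inv}(w) = |\{(i,j) : 1 \leq i < j \leq n, w(i) > w(j)\}|$ and $\mathrm{neg}(w) = \sum_{w(i) < 0} |w(i)|$. The plan is to show $L = \ell$ by establishing three properties of $L$: (a) $L(e) = 0$; (b) $|L(ws) - L(w)| = 1$ for every simple reflection $s$; and (c) whenever $L(w) > 0$ there exists a simple reflection $s$ with $L(ws) = L(w) - 1$. From (a) and (b), applying (b) along any reduced expression for $w$ yields $L(w) \leq \ell(w)$. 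From (c), together with the observation that $L(w) = 0$ forces $w = e$ (no inversions and no negative values makes $w$ the identity), induction on $L(w)$ produces a word of length $L(w)$ representing $w$, giving $\ell(w) \leq L(w)$.

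Property (a) is immediate. For (b) with $s = s_i$, right multiplication swaps $w(i)$ with $w(i+1)$, so $\mathrm{neg}(w)$ is invariant (it depends only on the multiset of values $\{w(1), \ldots, w(n)\}$), while $\mathrm{inv}(ws_i) - \mathrm{inv}(w) = \pm 1$, the sign being $+1$ if $w(i) < w(i+1)$ and $-1$ otherwise.

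The more delicate instance of (b) is $s = t$, which negates $w(1)$ and fixes every other value. A direct accounting shows $\mathrm{neg}(wt) - \mathrm{neg}(w) = w(1)$ regardless of the sign of $w(1)$. For the inversion change, only pairs $(1, j)$ with $j > 1$ are affected, and one finds the change to equal a signed count of the values $w(j)$ lying strictly between $-|w(1)|$ and $|w(1)|$. The key observation is that since $w$ is a signed permutation, for each $1 \leq k \leq n$ exactly one of $\{k, -k\}$ appears among $\{w(1), \ldots, w(n)\}$; applied to $k = 1, \ldots, |w(1)| - 1$ this forces the count to equal exactly $|w(1)| - 1$. The $\mathrm{neg}$ and inversion contributions carry opposite signs (depending on $\sgn(w(1))$) and combine to give $L(wt) - L(w) = \sgn(w(1)) \in \{\pm 1\}$.

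For (c), if $w(i) > w(i+1)$ for some $i$ then $s_i$ is a descent by the $s_i$ analysis. Otherwise $w(1) < \cdots < w(n)$; combined with $L(w) > 0$ this forces $w(1) < 0$, and the $t$-computation above gives $L(wt) = L(w) - 1$. The main obstacle is the $t$-multiplication step: the separate large contributions of size $|w(1)|$ to $\mathrm{neg}$ and of size $|w(1)| - 1$ to the inversions at position $1$ must cancel to leave exactly $\pm 1$, a cancellation that relies on the signed-permutation constraint in an essential way.
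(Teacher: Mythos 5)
Your proof is correct, but it takes a genuinely different route from the paper. The paper invokes the root-theoretic identity $\ell(w) = |w\Phi^+ \cap -\Phi^+|$ (cited from Geck--Pfeiffer) for the explicit type $B$ root system and simply counts, for each of the three families of positive roots $e_i$, $-e_i + e_j$, $e_i + e_j$, when $w$ sends them to negative roots; summing the three counts gives the formula in a few lines. You instead work directly with the statistic $L = \mathrm{inv} + \mathrm{neg}$ and prove it coincides with the length function by an exchange-style induction: $L(e)=0$, $L$ changes by exactly $\pm 1$ under right multiplication by any simple reflection (the delicate case being $t$, where the change $w(1)$ in the negative-sum part cancels against the signed count $|w(1)|-1$ of values $w(j)$ strictly between $-|w(1)|$ and $|w(1)|$, which is pinned down by the signed-permutation constraint), and $L(w)>0$ guarantees a descent, with $L(w)=0$ forcing $w=e$. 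Both directions $L \leq \ell$ and $\ell \leq L$ then follow cleanly. Your argument is more elementary and self-contained -- it needs only the concrete action of $t$ and the $s_i$ on signed permutations, not the root system or the cited length formula -- at the price of a longer case analysis; the paper's argument is shorter but leans on standard Coxeter-theoretic machinery.
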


\begin{proof}
Let $e_1, \ldots, e_n$ be the standard basis of $\mathbb{R}^n$.  The group $B_n$ acts faithfully on $\mathbb{R}^n$ by $we_i = \sgn(w(i)) e_{|w(i)|}$ for each $w \in B_n$ and $1 \leq i \leq n$, and this representation of $B_n$ is its standard reflection representation as a Coxeter group.  The associated root system $\Phi$ is given by $$\Phi =  \{\pm e_i : 1 \leq i \leq n\} \cup \{\pm e_i \pm e_j : 1 \leq i < j \leq n\}.$$  The system of positive roots $\Phi^+ \subset \Phi$ given by $$\Phi^+ = \{e_i : 1 \leq i \leq n\} \cup \{\pm e_i + e_j : 1 \leq i < j \leq n\}$$ is compatible with the choice of simple reflections $t, s_1, \ldots, s_{n - 1}$.  In particular, for any $w \in B_n$, $\ell(w) = |w\Phi^+ \cap -\Phi^+|$ \cite[1.3.4]{GP}.

Now, note that for $1 \leq i \leq n$ we have $w(e_i) = \sgn(w(i))e_{|w(i)|} \in -\Phi^+$ if and only if $w(i) < 0$.  Also, for any $1 \leq i < j \leq n$, $w(-e_i+e_j) = -\sgn(w(i))e_{|w(i)|} + \sgn(w(j))e_{|w(j)|} \in -\Phi^+$ if and only if  $w(i) > w(j)$.  Finally, consider $w(e_i + e_j)$. If $|w(i)| < |w(j)|$, this is positive if and only if $w(j)$ is positive. Thus, 
\[
\ell(w) = |\{i \mid 1 \leq i \leq n, w(i) < 0\}| + |\{(i, j) \mid 1 \leq i < j \leq n, w(i) > w(j)\}| + \sum_{\substack{1 \leq i \leq n \\ w(i) < 0}} \left(|w(i)|-1\right).
\]
Combining the first and third terms gives the desired formula.\end{proof}

For $w \in B_n$, we will denote by $-w$ the element $w_0w = ww_0$ given by $(-w)(i) = -w(i)$.

\subsection{Hecke Algebras}

\begin{definition}
Let $(W, S)$ be a Coxeter system, and let $q : S \to \mathbb{C}$ be a function satisfying $q(wsw^{-1}) = q(s)$ for all $w \in W$ and $s \in S$. Define the \emph{Hecke algebra} $H_q(W)$ to be the $\mathbb{C}$-algebra with generators $T_s$, $s \in S$, and relations
\begin{enumerate}
	\item $\underbrace{T_sT_tT_s\cdots }_{m_{s,t}} = \underbrace{T_tT_sT_t\cdots }_{m_{s,t}}$ for each pair of distinct elements $s, t \in S$
	\item $(T_s - 1)(T_s + q(s)) = 0$ for each $s \in S$.
\end{enumerate}
\end{definition}

\begin{remark}
We choose a different convention than \cite[8.1.4]{GP}, which uses the relation $(T_s' + 1)(T_s' - q(s)) = 0$ instead of relation (2). These definitions give isomorphic algebras (for example, under the mapping $T_s \mapsto -T_s'$) after appropriately modifying the parameter $q$.
\end{remark}

\begin{proposition}{\cite[4.4.3]{GP}}\label{prop:t-basis}
Let $(W, S)$ be a Coxeter system. For each $w$, there is a well-defined element $T_w \in H_q(W)$ such that
\[
T_w = T_{s_1}\ldots T_{s_l}
\]
whenever $w = s_1\ldots s_l$, $s_i \in S$, is a reduced expression. Additionally, for any $w \in W$, $s \in S$, 
\[
T_wT_s = \left\{\begin{array}{ll} T_{ws} & \text{ if $\ell(ws) > \ell(w)$} \\ q(s)T_{ws} + (1-q(s)) T_{w} & \text{ otherwise.}\end{array}\right.
\]
An analogous statement holds for $T_sT_w$. The set $\{T_w \mid w \in W\}$ forms a $\mathbb{C}$-basis for $H_q(W)$.
\end{proposition}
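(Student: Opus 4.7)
The plan is to treat the three assertions in sequence: well-definedness of $T_w$, the two multiplication formulas, and finally the basis property. For well-definedness, I would invoke Matsumoto's theorem: any two reduced expressions for the same element $w \in W$ can be transformed into one another by a sequence of braid moves of the form $sts\cdots = tst\cdots$ (length $m_{s,t}$) applied to contiguous subwords. Since the braid relations are built into the defining relations of $H_q(W)$, each such move leaves the product $T_{s_1}\cdots T_{s_l}$ unchanged, so $T_w := T_{s_1}\cdots T_{s_l}$ depends only on $w$.

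Next, for the multiplication formula, fix $w \in W$, $s \in S$, and a reduced expression $w = s_1\cdots s_l$. If $\ell(ws) > \ell(w)$, then by definition $s_1\cdots s_l s$ is a reduced expression for $ws$, giving $T_w T_s = T_{ws}$ directly. If instead $\ell(ws) < \ell(w)$, the strong exchange condition guarantees that $w$ admits a reduced expression ending in $s$, say $w = s_1'\cdots s_{l-1}' s$. Expanding the quadratic relation yields $T_s^2 = (1-q(s)) T_s + q(s)$, so
\[
T_w T_s = T_{s_1'}\cdots T_{s_{l-1}'} T_s^2 = (1-q(s))\, T_w + q(s)\, T_{ws},
\]
as claimed. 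The analogous formula for $T_s T_w$ would then follow by applying the anti-involution $T_w \mapsto T_{w^{-1}}$, which is well-defined because the defining relations of $H_q(W)$ are invariant under reversal of words.

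Finally, the relations established above show that the $\mathbb{C}$-span of $\{T_w : w \in W\}$ contains $T_1 = 1$ and is stable under left multiplication by each $T_s$, hence equals $H_q(W)$; this gives the spanning statement with essentially no work. The substantive content is linear independence, and the standard strategy I would follow is to construct the natural representation of $H_q(W)$ on the free $\mathbb{C}$-module $V$ with basis $\{e_w : w \in W\}$, where the generators act by
\[
T_s \cdot e_w = \begin{cases} e_{ws} & \text{if } \ell(ws) > \ell(w), \\ q(s)\, e_{ws} + (1-q(s))\, e_w & \text{if } \ell(ws) < \ell(w). \end{cases}
\]
The main obstacle is verifying that these operators satisfy the defining relations: the quadratic relation is a direct two-line computation, but the braid relations require a careful case analysis reducing to the dihedral subgroups $\langle s, t \rangle$ and tracking how lengths change under multiplication by $s$ and $t$ on either side. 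Once the representation is shown to be well-defined, induction on $\ell(w)$ using the first clause of the multiplication rule gives $T_w \cdot e_1 = e_w$, so the $T_w$ act on $V$ by linearly independent operators and are therefore linearly independent in $H_q(W)$.
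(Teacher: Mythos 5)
The paper gives no proof of this proposition at all: it is quoted as background from \cite[4.4.3]{GP}, so there is no internal argument to compare against. Your sketch is the standard proof of this standard fact and is essentially the one found in the cited reference (and in Bourbaki/Humphreys): Matsumoto's theorem for well-definedness of $T_w$, the quadratic relation plus the exchange condition for the multiplication rule, the reversal anti-automorphism for the $T_sT_w$ version, and a faithful-looking module with basis $\{e_w\}$ for linear independence. Two small remarks. First, with your convention that the \emph{left} action of $T_s$ on $e_w$ is governed by $\ell(ws)$ versus $\ell(w)$, induction actually yields $T_w\cdot e_1 = e_{w^{-1}}$ rather than $e_w$ (the operators implement right multiplication); this is harmless for linear independence since $w\mapsto w^{-1}$ is a bijection, but you should either state it that way or make $V$ a right module (equivalently, use $\ell(sw)$ for a left action). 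Second, the verification of the braid relations for these operators --- which you correctly identify as the crux --- is usually finessed in the literature not by a direct dihedral case analysis but by introducing both left and right multiplication operators $\lambda_s,\rho_t$, checking that they commute, and deducing the relations from that; the direct rank-two analysis you propose does work but is noticeably more laborious.
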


The $T$-basis here also gives an alternate presentation for the Hecke algebra as the algebra generated by generators $T_w$, $w \in W$, with relations given by the formula for $T_wT_s$ appearing in Proposition \ref{prop:t-basis} for any $w \in W$ and $s \in S$.

\begin{proposition}{\cite[4.4.7]{GP}}\label{prop:free-module}  Let $W'$ is a parabolic subgroup of Coxeter group $W$ and let $X_{W'}$ be the set of distinguished right coset representatives of $W'$ in $W$. Then, as a left-$H_q(W')$ module,
\[
H_q(W) \cong \bigoplus_{x \in X_{W'}} H_q(W')T_x,
\]
and this decomposition is compatible with the $T$-bases of $H_q(W)$ and $H_q(W')$.
\end{proposition}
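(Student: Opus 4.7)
The plan is to reduce this proposition to the combinatorial fact about distinguished coset representatives already recorded in the excerpt, together with the multiplication rule for the $T$-basis in Proposition \ref{prop:t-basis}. The key input is the unique decomposition $w = w' x$ with $w' \in W'$ and $x \in X_{W'}$, which satisfies the length-additivity $\ell(w) = \ell(w') + \ell(x)$ (this is the statement noted just after the definition of $X_{W'}$).

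The first step is to upgrade this length-additive decomposition of $W$ to a multiplicative identity for the $T$-basis: for any such factorization $w = w' x$, one has $T_w = T_{w'} T_x$ in $H_q(W)$. I would prove this by concatenating a reduced expression for $w'$ (whose simple reflections all lie in the subset generating $W'$, since the length function on $W'$ is the restriction of the length function on $W$) with a reduced expression for $x$; because lengths add, the concatenation is a reduced expression for $w$, and the well-definedness portion of Proposition \ref{prop:t-basis} then gives $T_{w'} T_x = T_w$. Equivalently, this can be obtained by induction on $\ell(x)$ directly from the displayed formula for $T_w T_s$ in Proposition \ref{prop:t-basis}.

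The second step is to define the candidate isomorphism $\Phi : \bigoplus_{x \in X_{W'}} H_q(W') T_x \to H_q(W)$ by the obvious multiplication map $h \otimes T_x \mapsto h T_x$. This is a map of left $H_q(W')$-modules essentially by construction, since left multiplication by $H_q(W')$ commutes with the embedding into $H_q(W)$. To establish bijectivity, I would use the step above to compute the image of a basis element $T_{w'} T_x$ of the left-hand side: it equals $T_{w' x}$. As $(w', x)$ ranges over $W' \times X_{W'}$, the product $w'x$ ranges bijectively over $W$, so $\Phi$ sends the basis $\{T_{w'} T_x : w' \in W', x \in X_{W'}\}$ of the left-hand side bijectively onto the $T$-basis $\{T_w : w \in W\}$ of $H_q(W)$ given by Proposition \ref{prop:t-basis}. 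This simultaneously proves surjectivity, injectivity, and the compatibility of bases asserted in the statement.

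I do not expect any genuine obstacle; the proposition is almost a formal consequence of the $T$-basis theorem. The only nontrivial ingredient is the length-additivity $\ell(w' x) = \ell(w') + \ell(x)$ for $w' \in W'$ and $x \in X_{W'}$, which is already in hand. The work is essentially bookkeeping to transport the coset decomposition of $W$ through the bijection $w \mapsto T_w$.
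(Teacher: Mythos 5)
Your argument is correct and is the standard proof of this fact: length-additivity $\ell(w'x)=\ell(w')+\ell(x)$ gives $T_{w'}T_x=T_{w'x}$ via concatenation of reduced expressions, and the bijection $W'\times X_{W'}\to W$ then matches the $T$-bases. The paper itself offers no proof, citing \cite[4.4.7]{GP}, and your reasoning is essentially the argument behind that citation, so there is nothing to flag.
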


For the remainder of this paper, we will consider only Hecke algebras attached to Coxeter groups $B_n$ and $S_k$. In $S_k$, all simple reflections are conjugate, so the Hecke parameter $q$ must take on the same value for all $s \in S$. Thus, we will write $H_q(S_k)$, $q \in \mathbb{C}$, to mean the Hecke algebra with parameter $s \mapsto q$. In $B_n$, all $s_i$ are conjugate, but $t$ falls in a separate conjugacy class. Thus, we will write $H_{p, q}(B_n)$, $p, q \in \mathbb{C}$, to indicate the Hecke algebra with parameter $t \mapsto p, s_i \mapsto q$.

\subsection{Computing Parameters in Type \texorpdfstring{$B$}{B}}

\begin{definition}
For integers $n \geq 0$ and $k \geq 2$, let $X_{n, k}$ be the set of distinguished coset representatives of $B_n \times S_k$ in $B_{n+k}$. Let $w_{n, k}$ denote the element of maximal length in $X_{n, k}$.
\end{definition}

We have that $w_{n, k} = w_0w_0' = w_0'w_0$, where $w_0$ is the longest element of $B_{n+k}$, and $w_0'$ is the longest element of $B_n \times S_k$. Note that $w_{n, k}$ is an involution.

Let $\mathbb{C}_\triv$ be the trivial representation of $H_q(S_k)$ on $\mathbb{C}$ in which each basis element $T_w$ acts by multiplication by 1, and let $1_\triv := 1 \in \mathbb{C}_\triv$.  We regard $\mathbb{C}_\triv$ as a right module over $H_q(S_k)$.  The main result of this paper is the following:

\begin{theorem}\label{thm:main}
Let $n \geq 0$ and $k \geq 2$, and let $q$ be a primitive $k^{th}$ root of unity. Let $z_x \in H_{p, q}(B_n \times S_k)$, $x \in X_{n, k}$, be the unique coefficients so that
\[
T_{w_{n, k}}^2 = \sum_{x \in X_{n, k}} z_xT_x.
\]
Then, $1_\triv \otimes z_{w_{n, k}} = 1_\triv \otimes (1 + (-p)^k)T_1$ as elements of $\mathbb{C}_\triv \otimes_{H_q(S_k)} H_{p, q}(B_n \times S_k)$.
\end{theorem}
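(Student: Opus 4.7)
The plan is to prove Theorem~\ref{thm:main} by induction on $n$, following the strategy indicated in the introduction.

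\emph{Base case} ($n = 0$). Here $B_n$ is trivial, $H_{p,q}(B_n \times S_k) = H_q(S_k)$, and $w_{0,k}$ is the longest distinguished right coset representative of $S_k$ in $B_k$: the signed permutation $i \mapsto i - k - 1$ of length $k(k+1)/2$ (by Lemma~\ref{lem:b-length}). I would fix the reduced expression
$$w_{0,k} = t \cdot (s_1 t) \cdot (s_2 s_1 t) \cdots (s_{k-1} s_{k-2} \cdots s_1 t),$$
and expand $T_{w_{0,k}}^2$ in the $T_w$-basis of $H_{p,q}(B_k)$ by iterated applications of the quadratic relations $T_t^2 = (1-p)T_t + p$ and $T_{s_i}^2 = (1-q)T_{s_i} + q$. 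This produces a combinatorial sum indexed by the ``cancellation patterns,'' i.e. the subsets of positions at which a quadratic relation is applied. Grouping the resulting basis elements by their $X_{0,k}$-component via Proposition~\ref{prop:free-module} isolates $z_{w_{0,k}} \in H_q(S_k)$. Writing $z_{w_{0,k}} = \sum_{u \in S_k} c_u T_u$, the target identity reduces to $\sum_u c_u = 1+(-p)^k$, which I would establish by collapsing quantum-factorial-type sums via $1 + q + \cdots + q^{k-1} = 0$, valid since $q$ is a primitive $k$th root of unity. After these cancellations, only a small number of extremal terms should survive, summing to exactly $1 + (-p)^k$.

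\emph{Inductive step} ($n \geq 1$). Suppose the result holds for $n-1$. The crucial structural observation is that $w_{n,k}$ fixes the first $n$ coordinates and acts on coordinates $n+1, \ldots, n+k$ as a shifted copy of $w_{0,k}$; in particular it commutes with the $B_n$ factor of $B_n \times S_k$. I would seek a length-additive factorization
$$T_{w_{n,k}} = T_\alpha \cdot T_{\tilde w} \cdot T_\beta,$$
where $\tilde w$ arises from $w_{n-1,k}$ via a parabolic inclusion $B_{n+k-1} \hookrightarrow B_{n+k}$ (together with an appropriate index shift), and $\alpha, \beta$ are explicit products of simple reflections implementing the shift from coordinates $n, \ldots, n+k-1$ to $n+1, \ldots, n+k$. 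With such a factorization, $T_{w_{n,k}}^2$ decomposes into $T_\alpha T_{\tilde w}^2 T_\beta$ plus correction terms supported on shorter coset representatives in $X_{n,k}$. Passing to $\mathbb{C}_\triv \otimes_{H_q(S_k)}(-)$ and invoking the inductive hypothesis for $z_{\tilde w}$ then produces the expected $(1 + (-p)^k) T_1$, provided one checks that the correction terms vanish or collapse under the trivial character of $H_q(S_k)$.

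The main obstacle is the base case: the explicit expansion of $T_{w_{0,k}}^2$ is combinatorially intricate, as one must track all $k(k+1)$ potential quadratic-relation applications and then reorganize the output into the parabolic decomposition. The most delicate step will be identifying which summands survive the primitive-root-of-unity collapses and verifying that their contributions telescope to precisely $1 + (-p)^k$. The inductive step, while requiring careful bookkeeping of reduced expressions, is conceptually more straightforward once the correct factorization $T_{w_{n,k}} = T_\alpha T_{\tilde w} T_\beta$ is identified.
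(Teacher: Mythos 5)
Your overall architecture (induction on $n$, with a combinatorial expansion of $T_{w_{0,k}}^2$ as the base case and a conjugation-by-a-shift factorization for the inductive step) matches the paper's, but as written there are two genuine gaps. The serious one is the base case: ``expand by the quadratic relations, group the cancellation patterns, and collapse quantum-factorial-type sums via $1+q+\cdots+q^{k-1}=0$'' is a statement of hope, not an argument, and the cancellation is not of that simple form. In the paper the expansion of $T_{w_{0,k}}^2$ is organized by a specific class of signed involutions (the ``good'' involutions), with the coefficient of each $T_w$ an explicit monomial in $p$, $1-p$, $q$, $1-q$ governed by the statistics $a(w)$, $a(-w)$, $c(w)$ (Theorem \ref{thm:w0k-decomposition}); extracting $z_{w_{0,k}}$ and applying the trivial character gives a polynomial $f_k$ satisfying $f_k=(1-p)f_{k-1}+p(1-q^{k-1})f_{k-2}$, which is then rewritten as a sum over separated $k$-sets, and only at that point does the root-of-unity hypothesis enter --- via averaging over the cyclic shifts $S\mapsto S+r$, where $\sum_{r}q^{|S'|r}$ vanishes because $0<|S'|\le\lfloor k/2\rfloor<k$. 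Even after the $q$-dependence is gone one is left with $\sum_i\left(\binom{k-i}{i}+\binom{k-i-1}{i-1}\right)p^i(1-p)^{k-2i}$, and identifying this with $1+(-p)^k$ requires a further binomial identity (Lemma \ref{lem:binom}); it is emphatically not the case that ``only a small number of extremal terms survive'' directly from $1+q+\cdots+q^{k-1}=0$. Without some replacement for this chain of lemmas your base case is unproved.

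In the inductive step your factorization is essentially the paper's ($T_{w_{n+1,k}}=T_cT_{w_{n,k}}T_{c^{-1}}$ with $c=s_{n+1}\cdots s_{n+k}$), but your description of the error terms is off: since $T_{c^{-1}}T_c=q^kT_1+(1-q)\sum_i q^{i-1}T_{s_{n+k}\cdots s_{n+i}\cdots s_{n+k}}$ (Lemma \ref{thm:tcinv-tc}), the correction terms do contribute to the coefficient of $T_{w_{n+1,k}}$ itself --- they are not ``supported on shorter coset representatives'' --- and they do not individually vanish under the trivial character. What the paper actually shows is that after tensoring with $1_\triv$ all $k$ correction coefficients $f_{i,w_{n+1,k}}$ coincide (via braid manipulations such as $w_{n,k}s_{n+i}\cdots s_{n+k-1}=s_{n+k-i}\cdots s_{n+1}w_{n,k}$ together with the double-coset Lemma \ref{lem:double-coset}), so their total contribution carries the factor $(1-q)\sum_{i=1}^kq^{i-1}=0$, while the main term carries $q^k=1$ and is handled by commuting $T_c$ past the coefficients and invoking Lemma \ref{lem:baby-succ}. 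That specific mechanism --- equality of the $f_i$'s followed by the geometric-series vanishing --- is the content of the step and is absent from your sketch, so the inductive step is also incomplete as it stands, though it is much closer to being repairable than the base case.
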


Theorem \ref{thm:main} completes the determination of the parameters for generalized Hecke algebras appearing in the representation theory of rational Cherednik algebras attached to Coxeter groups in the following way.  Consider any tuple $(B_n \times S_k, B_{n+k}, c, L)$ meeting the conditions in the introduction.  These are the tuples for which the associated generalized Hecke algebra parameter $Q := q(B_n \times S_k, B_{n + k}, c, L)$ is unknown.  Let $c_1 = c(t)$ and $c_2 = c(s_1)$. Without loss of generality, to compute $Q$, we may assume that $c_2 = \frac{r}{k}$ with $r$ a positive integer relatively prime to $k$ and that $L = L_c(\lambda \otimes \triv)$, where $\lambda$ is an irreducible representation of $B_n$ and $\triv$ is the trivial representation of $S_k$; see \cite[Section 4.4]{LS} for details. Then, by Theorem \ref{thm:main} and \cite[Theorem 4.2.11]{LS}, the matrix
\[
T = \left(\begin{array}{cc} 0 & -(-p^k) \\ 1 & 1 + (-p)^k\end{array}\right),
\]
where $p = e^{-2i\pi c_1}$, satisfies the quadratic relation $(T-1)(T+Q) = 0$. Thus, $Q = -(-p)^k$, with no dependence on $n$ or $L$.

The remainder of this paper is dedicated to studying the element $T_{w_{n, k}}^2$ and proving Theorem \ref{thm:main}.

\section{Case of \texorpdfstring{$S_k \subset B_k$}{S\_k in B\_k}}\label{sec:base-case}

\subsection{Good Involutions and the \texorpdfstring{$T$}{T}-basis Decomposition of \texorpdfstring{$T_{w_{0, k}}^2$}{T\_w\_0,k squared}}
We will first prove a theorem giving a full description of $T_{w_{0, k}}^2$ in the $T$-basis in terms of combinatorial properties of a certain set of elements of $B_k$.

\begin{definition}\label{def:good}
Call an element $w \in B_k$ \emph{good} if $w^2 = 1$, and, for each $1 \leq i \leq k$, either $w(i) = i$ or $w(i) < 0$. Let $G_k$ denote the set of good involutions in $B_k$.
\end{definition}

We view the groups $B_k$ in a chain $B_1 \subset B_2 \subset B_3 \ldots$, where $B_i$ is identified as the subgroup of $B_{i+1}$ that fixes $i+1$. In this manner, $B_i$ is the parabolic subgroup of $B_{i+1}$ generated by $\{t, s_1, \ldots, s_{i-1}\}$. If $w \in G_k$, then $w$, viewed as an element of $B_{k+r}$ for any $r \geq 0$, is in $G_{k+r}$. Thus, the sets $G_k$ form a chain $G_1 \subset G_2 \ldots$ compatible with the chain $B_1 \subset B_2 \ldots$.

\begin{definition}\label{def:ad}
For any $w \in B_k$, let $a(w) = |\{j \mid 1 \leq j \leq k \mid w(j) = j\}|$, and, for $0 \leq i \leq k$, let $d(i, w) = |\{j \mid i < j \leq k, w(j) = j\}|$.
\end{definition}

\begin{definition}\label{def:tidy}
For any $w \in B_k$, call an unordered pair $\{i, j\}$, where $1 \leq i, j \leq k$ and $i \neq j$, \emph{tidy} in $w$, or $w$-\emph{tidy}, if $-w(i) < j$ and $-w(j) < i$. Denote by $c(w)$ the number of $w$-tidy pairs.
\end{definition}

\begin{remark}  With respect to the chain of inclusions of $B_k$, an element $w \in B_k$ determines an element $w_r \in B_{k+r}$, for any $r \geq 0$. However, $a(w_r)$ depends on $r$; we have $a(w_r) = a(w) + r$.  Similarly, $d(i, w_r)$ and $c(w_r)$ depend on $r$.\end{remark}

We now present a formula for the decomposition of $T_{w_{0, k}}^2$ in the $T$-basis:
\begin{theorem}\label{thm:w0k-decomposition}
For each $k \geq 1$, we have:
\[
T_{w_{0, k}}^2 = \sum_{w \in G_k} p^{\frac{k+a(w)-a(-w)}{2}}(1-p)^{a(-w)}q^{c(w)}(1-q)^{\frac{k-a(w)-a(-w)}{2}}T_w.
\]
\end{theorem}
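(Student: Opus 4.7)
The approach is induction on $k$. For the base case $k = 1$, $T_{w_{0,1}}^2 = T_t^2 = pT_1 + (1-p)T_t$ by the quadratic relation, matching the formula summed over $G_1 = \{1, t\}$.

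For the inductive step, let $u_k := ts_1s_2\cdots s_{k-1} \in B_k$. By Lemma \ref{lem:b-length}, $\ell(u_k) = k$, and a direct check of signed permutations shows $u_kw_{0,k-1} = w_{0,k}$; since $\ell(u_k) + \ell(w_{0,k-1}) = k + \binom{k}{2} = \binom{k+1}{2} = \ell(w_{0,k})$, this is a reduced factorization. Because $w_{0,k}$ is an involution, the reversed factorization $w_{0,k} = w_{0,k-1}u_k^{-1}$ is also reduced. Proposition \ref{prop:t-basis} then yields the two equalities $T_{w_{0,k}} = T_{u_k}T_{w_{0,k-1}} = T_{w_{0,k-1}}T_{u_k^{-1}}$. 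Writing $T_{w_{0,k}}^2 = T_{w_{0,k}} \cdot T_{w_{0,k}}$ and substituting the first equality for the left factor and the second for the right factor gives the key recursion
\[
T_{w_{0,k}}^2 \;=\; T_{u_k}\, T_{w_{0,k-1}}^2\, T_{u_k^{-1}},
\]
where $T_{w_{0,k-1}}$ is viewed inside $H_{p,q}(B_k)$ via the inclusion of Proposition \ref{prop:free-module}. Applying the inductive hypothesis then reduces the theorem to understanding, for each $v \in G_{k-1}$, the $T$-basis expansion of $T_{u_k}T_vT_{u_k^{-1}} \in H_{p,q}(B_k)$.

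The central step is a lemma giving this expansion explicitly. From direct computation of small cases ($k = 2, 3$), one expects the support of $T_{u_k}T_vT_{u_k^{-1}}$ to consist of those $w \in G_k$ obtained from $v$ by one of three types of extension at the new coordinate $k$: either $w(k) = k$, or $w(k) = -k$ (in which case $w = vt_k$, with $t_k$ the sign reflection on coordinate $k$), or $w(a) = -k$, $w(k) = -a$ forming a new swap pair with some index $a$ in the support of $v$. The coefficient of each $T_w$ is an explicit polynomial in $p$ and $q$ determined by the block structure of $v$. Granting this lemma, the theorem follows by substituting into the recursion, weighting each $T_{u_k}T_vT_{u_k^{-1}}$ by the inductive coefficient of $T_v$, regrouping by $w \in G_k$, and verifying case-by-case on the value of $w(k)$ that the total coefficient of $T_w$ matches $p^{(k+a(w)-a(-w))/2}(1-p)^{a(-w)}q^{c(w)}(1-q)^{(k-a(w)-a(-w))/2}$. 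The required identities amount to tracking how $a(\cdot)$, $a(-\cdot)$, and $c(\cdot)$ change under $v \rightsquigarrow w$; the subtle point is that the exponent of $q$ contributed by the lemma, added to $c(v)$, must hit exactly $c(w)$, reflecting the combinatorial meaning of tidy pairs.

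The main obstacle is establishing the expansion lemma. Computing $T_{u_k}T_vT_{u_k^{-1}}$ requires simplifying a product of $2k + \ell(v)$ generators using the quadratic relations and the braid relation $T_tT_{s_1}T_tT_{s_1} = T_{s_1}T_tT_{s_1}T_t$, and the combinatorial structure of $v$ as a good involution must be leveraged to organize the resulting terms. A natural strategy is a secondary induction on the number of non-fixed indices of $v$, reducing to the model cases where $v$ is the identity, a single sign reflection $t_a$, or a single swap pair; each admits a direct computation, after which the general case is assembled by combining the effects of individual blocks.
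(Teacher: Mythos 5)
Your global strategy coincides with the paper's: the same reduced factorization (the paper's $x = ts_1\cdots s_k \in B_{k+1}$ is your $u_{k+1}$) gives the recursion $T_{w_{0,k}}^2 = T_{u_k}T_{w_{0,k-1}}^2T_{u_k^{-1}}$, and the theorem is indeed reduced to an explicit $T$-basis expansion of $T_{u_k}T_vT_{u_k^{-1}}$ for $v$ a good involution; this is exactly the content of Lemma \ref{lem:conj-by-tx}, combined with Lemma \ref{lem:abc-succ} and the decomposition $G_{k+1} = \coprod_{w \in G_k}\Succ(w)$ from Lemma \ref{prop:succ}. The gap is that this central expansion lemma is neither stated precisely nor proved in your proposal: you do not give the coefficients (they are $pq^{a(v)}$, $(1-p)q^{a(v)}$, and $(1-q)q^{d(a,v)}$ for the three types of successor, with $d(a,v)$ as in Definition \ref{def:ad}), and you yourself flag its proof as ``the main obstacle.'' Since everything else in the argument is routine bookkeeping once this lemma is available, the proposal as written does not yet constitute a proof.

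Two concrete issues with the sketch of that lemma. First, the support is described incorrectly: the terms of the third type pair the new coordinate with a \emph{fixed} point $a$ of $v$ (i.e. $v(a)=a$), not with an index in the support of $v$; this matters, since the count of such terms (one per fixed point) is what makes the final coefficient bookkeeping close up. Second, the proposed proof by reducing to single-block model cases ($v$ the identity, a single sign reflection, a single $2$-cycle) and ``assembling'' is not obviously viable: the coefficient of the new-pair term is $q^{d(a,v)}$, which depends on how the fixed points of $v$ interleave with $a$ across all blocks, so the answer is not a product of per-block contributions and any assembly argument must track all coordinates simultaneously. The paper avoids this by a downward induction that multiplies $T_xT_v$ by one generator $T_{s_{i-1}}$ at a time (Equation \ref{eqn:claim}), using the signed-permutation length criterion at each step to decide whether the quadratic relation fires; some equally careful one-step analysis (or an equivalent) is what is missing from your argument.
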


Theorem \ref{thm:w0k-decomposition} will be proved after establishing some lemmas.

Let $x \in B_{k+1}$ be the element $ts_1s_2\cdots s_k$, and let $x_i$, for $1 \leq i \leq k$, be the element $ts_1s_2\cdots \hat s_i \cdots s_k$, where $\hat s_i$ indicates that it is left out of the product.

\begin{lemma}\label{prop:succ}
\ 
\begin{enumerate}
	\item If $w \in G_k$, then:
	\begin{itemize}
		\item $xwx^{-1} \in G_{k+1}$ and $xwx^{-1}(1) = 1$
		\item $xwx^{-1}t \in G_{k+1}$ and $xwx^{-1}t(1) = -1$
		\item For $1 \leq i \leq k$, $xwx_i^{-1} \in G_{k+1}$ if and only if $w(i) = i$, and in this case $xwx_i^{-1}(1) = -(i+1)$.
	\end{itemize}
	\item If $w \in G_{k + 1}$, then:
	\begin{itemize}
		\item $x^{-1}wx \in B_k$ if and only if $w(1) = 1$,
		\item $x^{-1}wtx \in B_k$ if and only if $w(1) = -1$,
		\item $x^{-1}wx_i \in B_k$ if and only if $w(1) = -(i+1)$ for $1 \leq i \leq k$.
	\end{itemize}
	In each such case, the resulting element of $B_k$ is good.
\end{enumerate}
\end{lemma}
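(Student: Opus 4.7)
The plan is to prove the lemma by direct computation with signed permutations in $B_{k+1}$. Unwinding the definitions using that $s_1 s_2 \cdots s_k$ realizes the cycle $1 \mapsto 2 \mapsto \cdots \mapsto k+1 \mapsto 1$ and that $t$ negates only $\pm 1$, I find $x(j) = j+1$ for $1 \leq j \leq k$ and $x(k+1) = -1$; the element $x_i$ agrees with $x$ except $x_i(i) = -1$ and $x_i(k+1) = i+1$, and the inverses follow immediately. I also use throughout the criterion that $y \in B_{k+1}$ lies in $B_k$ if and only if $y(k+1) = k+1$.

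For part (1), suppose $w \in G_k$, viewed inside $B_{k+1}$ so that $w$ fixes $k+1$. Then $xwx^{-1}(1) = xw(-(k+1)) = -x(k+1) = 1$, and for $2 \leq j \leq k+1$ a case split on whether $w(j-1) = j-1$ or $w(j-1) < 0$ shows $xwx^{-1}(j) \in \{j\} \cup \{-2,-3,\ldots\}$; together with involutivity inherited from $w$, this gives $xwx^{-1} \in G_{k+1}$. The claim for $xwx^{-1}t$ follows by observing that $xwx^{-1}$ fixes $\pm 1$ while $t$ moves only $\pm 1$, so the two commute and right-multiplication by $t$ simply replaces the value at $1$ by $-1$.

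The third bullet is the subtlest. The computation $xwx_i^{-1}(1) = -xw(i)$ produces $-(i+1)$ exactly when $w(i) = i$. For the involutivity, $(xwx_i^{-1})^2 = 1$ is equivalent to $w$ commuting with $x_i^{-1}x$; a direct calculation gives $x_i^{-1}x = (i,k+1)$, the signed transposition swapping $i$ and $k+1$, and since $w$ already fixes $k+1$, commuting with this is exactly the condition $w(i) = i$. Goodness of the result then follows by the same case split as before. Part (2) is essentially the statement that the maps in part (1) are invertible: using the criterion that $y \in B_k$ if and only if $y(k+1) = k+1$, each equivalence reduces to a one-line calculation, for example $x^{-1}wx(k+1) = -x^{-1}w(1) = k+1$ if and only if $w(1) = 1$; goodness of the resulting element of $B_k$ then follows by a symmetric case analysis in which the specified hypothesis on $w(1)$ rules out the only potential violation, namely $x^{-1}$ sending a value $\mp 1$ back up to $\pm(k+1)$.

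The main obstacle is keeping track of the sign and index cases without error, as the bookkeeping can balloon. The critical observation simplifying matters is the clean identity $x_i^{-1}x = (i,k+1)$, which reduces the otherwise opaque involutivity condition in the third bullet of part (1) to a transparent commutativity condition on $w$.
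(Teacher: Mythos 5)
Your proposal is correct and follows essentially the same route as the paper: explicit computation of $x$, $x_i$, and the relevant conjugates as signed permutations, combined with the criterion that an element of $B_{k+1}$ lies in $B_k$ exactly when it fixes $k+1$. The only minor difference is that you establish involutivity of $xwx_i^{-1}$ via the (correct) identity $x_i^{-1}x=(i,k+1)$ and a commutation argument, whereas the paper reads the ``if and only if'' off from the computed values $xwx_i^{-1}(1)=-xw(i)$ and $xwx_i^{-1}(i+1)=-1$; this is a harmless, if anything slightly more explicit, variant of the same computation.
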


\begin{proof}
First, consider $w \in G_k$. Then, $xwx^{-1}$ maps $1$ to $1$ and $i+1$ to $w(i) + \sgn(w(i))$ for $1 \leq i \leq k$, so this element is a good involution in $B_{k+1}$. Similarly, $xwx^{-1}t$ maps $1$ to $-1$ and $i+1$ to $w(i) + \sgn(w(i))$ for any $1 \leq i \leq k$, so this is also in $G_{k+1}$. Finally, consider $xwx_i^{-1}$ for some $1 \leq i \leq k$. For any $j \neq i$, $1 \leq j \leq k$, we have that $xwx_i^{-1}(j+1)= w(j) + \sgn(w(j))$, so for each such $j$, $xwx_i^{-1}(j+1) = j+1$ or $xwx_i^{-1}(j+1) < 0$. Also, $xwx_i^{-1}(i+1) = -1$, so $xwx_i^{-1}$ is good if and only if it sends $1$ to $-(i+1)$. But $xwx_i^{-1}(1) = -xw(i)$, and $x^{-1}(i+1) = i$, so $w(i) = i$ if and only if $xwx_i^{-1}$ is good, giving (1).

Now, consider $w \in G_{k+1}$. An element of $B_{k+1}$ is also an element of $B_k$ exactly if it fixes $k+1$. An element $x^{-1}wy$, $y \in B_{k+1}$, fixes $k+1$ if and only if $y(k+1) = wx(k+1) = -w(1)$. But $x(k+1) = -1$, $tx(k+1) = 1$, and $x_i(k+1) = i+1$. Thus, for each possible value of $w(1)$, there exists a unique element $y \in \{x, tx\} \cup \{x_i \mid 1 \leq i \leq k\}$ such that $x^{-1}wy$ fixes $k+1$. If $y$ is $x$ or $tx$, then $x^{-1}wy$ maps $j$ to $w(j+1) - \sgn(w(j+1))$ for each $1 \leq j \leq k$, and so $x^{-1}wy$ is good. If $y$ is $x_i$, then $x^{-1}wy$ maps $i$ to $i$, and for each $1 \leq j \leq k$ such that $j \neq i$, $x^{-1}wy(j) = w(j+1) - \sgn(w(j+1))$. Therefore, $x^{-1}wx_i \in B_k$ is good if $w(1) = -(i+1)$, giving (2).\end{proof}

\begin{definition}\label{def:succ}
For $w \in G_k$, define $\Succ(w)$ to be the set $\{xwx^{-1}, xwx^{-1}t\} \cup \{xwx_i^{-1} \mid 1 \leq i \leq k, w(i) = i\}$. For $w \in G_{k+1}$, define $\Pred(w)$ to be $x^{-1}wx$ if $w(1) = 1$, $x^{-1}wtx$ if $w(1) = -1$, and $x^{-1}wx_i$ if $w(1) = -(i+1)$ for $1 \leq i \leq k$.
\end{definition}

Note that for each $w \in G_k$, each $w' \in \Succ(w)$ has that $\Pred(w') = w$. Also, for each $w' \in G_{k+1}$, $w' \in \Succ(\Pred(w'))$. This shows that $G_{k+1} = \coprod_{w \in G_k} \Succ(w)$.

\begin{lemma}\label{lem:abc-succ}
For any $w \in G_k$, we have:
\begin{enumerate}
	\item $a(xwx^{-1}) = a(w) + 1$ and $a(-xwx^{-1}) = a(-w)$,
	\item $a(xwx^{-1}t) = a(w)$ and $a(-xwx^{-1}t) = a(-w) + 1$,
	\item $a(xwx_i^{-1}) = a(w) - 1$ and $a(-xwx_i^{-1}) = a(-w)$ for any $1 \leq i \leq k$ such that $w(i) = i$.
	\item $c(xwx^{-1}) = c(xwx^{-1}t) = c(w) + a(w)$.
	\item $c(xwx_i^{-1}) = c(w) + d(i, w)$ for any $1 \leq i \leq k$ such that $w(i) = i$.
\end{enumerate}
\end{lemma}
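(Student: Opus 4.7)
The plan is to verify each of the five claims by direct computation, using the explicit formulas for the three types of successors established in the proof of Lemma~\ref{prop:succ}. Recall that $xwx^{-1}$ sends $1 \mapsto 1$ and $m+1 \mapsto w(m) + \sgn(w(m))$ for $1 \le m \le k$; that $xwx^{-1}t$ differs only by sending $1 \mapsto -1$; and that $xwx_i^{-1}$ sends $1 \mapsto -(i+1)$, $i+1 \mapsto -1$, and $j+1 \mapsto w(j) + \sgn(w(j))$ for $j \ne i$.

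For claims (1)--(3), the analysis reduces to a bookkeeping exercise on fixed points. For any $1 \le m \le k$ (with $m \ne i$ in case (3)), the index $m+1$ is fixed by the successor iff $w(m) + \sgn(w(m)) = m+1$; since $w$ is good, this forces $w(m) = m$. Symmetrically, $m+1$ is fixed by the negation iff $w(m) = -m$. The six formulas then follow by tracking the behavior of the new indices: for $xwx^{-1}$ the index $1$ is fixed but not by the negation; for $xwx^{-1}t$ the reverse holds; and for $xwx_i^{-1}$ neither $1$ nor $i+1$ is fixed by either the successor or its negation, while the old fixed point of $w$ at $m=i$ is lost.

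For (4) and (5), the central observation is that the shift $\{m,n\} \mapsto \{m+1, n+1\}$ should induce a bijection between $w$-tidy pairs in $\{1,\ldots,k\}$ (avoiding $i$ in case (5)) and $w'$-tidy pairs in $\{2,\ldots,k+1\}$ (avoiding $i+1$ in case (5)). To verify this I would compare the conditions $-w'(m+1) < n+1$ and $-w(m) < n$: when $w(m) = m$ both inequalities are automatic, while when $w(m) < 0$ one has $-w'(m+1) = -w(m)+1$, so the two strict inequalities between integers coincide. This shift correspondence accounts for exactly $c(w)$ of the $w'$-tidy pairs.

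Finally I would count the remaining $w'$-tidy pairs, namely those involving the new index $1$ (in cases (4) and (5)) or $i+1$ (in case (5)). For a pair $\{1, m+1\}$ the condition $-w'(j) < 1$ amounts to $w'(m+1) \ge 0$, i.e.\ to $w(m) = m$; in case (4) this gives $a(w)$ new tidy pairs, while in case (5) the added constraint $-w'(1) = i+1 < m+1$ restricts $m$ to $m > i$ and yields $d(i, w)$ new pairs. In case (5) one must additionally check that pairs $\{i+1, n+1\}$ with $n \ne i$ correspond bijectively to $w$-tidy pairs $\{i, n\}$ already counted; the subtle point is the off-by-one between $-w(n) < i+1$ and $-w(n) < i$, which is resolved by the observation $w(n) \ne -i$ (a direct consequence of $w(i) = i$ and $w^2 = 1$). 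I expect this compatibility of strict integer inequalities across the index shift to be the main technical point, and it is precisely where the good-involution hypothesis is essential.
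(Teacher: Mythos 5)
Your proposal is correct and follows essentially the same route as the paper: the explicit descriptions of the three successor elements, fixed-point bookkeeping for (1)--(3), and the index-shift bijection on tidy pairs together with a count of the new pairs through the index $1$ (and $i+1$) for (4)--(5). The only cosmetic difference is your appeal to $w(n) \neq -i$ to settle the pairs $\{i+1, n+1\}$; the paper instead observes that both $i$-side conditions are automatic (since $w(i) = i > -n$ and $g(i+1) = -1 > -(n+1)$) while the $n$-side conditions match under the shift, but either observation closes that case.
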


\begin{proof}
To show (1), recall that $xwx^{-1}$ sends $1$ to $1$ and sends $j+1$ to $w(j) + \sgn(w(j))$ for each $1 \leq j \leq k$. Thus, $xwx^{-1}(j+1) = j+1$ if and only if $w(j) = j$. Since $1$ is also fixed, $a(xwx^{-1}) = a(w) + 1$. The element $-xwx^{-1}$ sends $j+1$ to $-w(j) - \sgn(w(j))$ for each $1 \leq j \leq k$, and so $j+1$ is fixed in $-xwx^{-1}$ if and only if $j$ is fixed in $-w$. Additionally, $-xwx^{-1}$ does not fix $1$, so $a(-xwx^{-1}) = a(w)$.

For (2), note that $xwx^{-1}t = txwx^{-1}$.  In particular, $xwx^{-1}t$ again sends $j+1$ to $w(j) + \sgn(w(j))$ for each $1 \leq j \leq k$. However, this element sends $1$ to $-1$ instead. This gives us that $a(xwx^{-1}t) = a(w)$ and $a(-xwx^{-1}t) = a(-w) + 1$, as desired.

Next, consider $xwx_i^{-1}$ where $w(i) = i$. For each $1 \leq j \leq k$ other than $i$, this element sends $j+1$ to $w(j) + \sgn(w(j))$. Thus, for each such $j$, $xwx_i^{-1}$ fixes $j$ if and only if $w$ does. However, $w$ fixes $i$, and $xwx_i^{-1}$ fixes neither $1$ nor $i+1$, so $a(xwx_i^{-1}) = a(w) - 1$. Similarly, $a(-xwx_i^{-1}) = a(-w)$, proving (3).

As previously noted, both $xwx^{-1}$ and $xwx^{-1}t$ send $j+1$ to $w(j) + \sgn(w(j))$ for each $1 \leq j \leq k$. It follows that any pair $\{j, j'\}$, $1 \leq j, j' \leq k$, is tidy in $w$ if and only if $\{j+1, j'+1\}$ is tidy in $xwx^{-1}$, which is in turn equivalent to $\{j+1, j'+1\}$ being tidy in $xwx^{-1}t$. Now, consider a pair of the form $\{1, j+1\}$ in $xwx^{-1}$ or $xwx^{-1}t$. This pair is tidy if and only if $\pm 1 > -(j+1)$ and $w(j) + \sgn(w(j)) > -1$.  This holds if and only if $w(j) > 0$, which holds if and only if $w(j) = j$, because $w$ is good.  It follows that $c(xwx^{-1}) = c(xwx^{-1}t) = c(w) + a(w)$, proving (4).

Now, let $g = xwx_i^{-1}$ for some fixed $1 \leq i \leq k$ such that $w(i) = i$. For any $j \neq i$ with $1 \leq j \leq k$, $g(j + 1) = w(j) + \sgn(w(j))$. As before, for any pair $\{j, j'\}$ such that $1 \leq j, j' \leq k$, $j \neq j'$, and $j, j' \neq i$, we have that $\{j, j'\}$ is tidy in $w$ if and only if $\{j+1, j'+1\}$ is tidy in $xwx_i^{-1}$. It remains to check pairs of the form $\{1, j+1\}$, $\{i+1, j+1\}$, and $\{1, i+1\}$, for $j \neq i$ such that $1 \leq j \leq k$. Since $w(i) = i$, $w(i) > -j$ for each $j$ such that $1 \leq j \leq k$, and $g(i+1) = -1 > -(j+1)$ for each such $j$, and thus $\{i+1, j+1\}$ is tidy in $g$ if and only if $\{i, j\}$ is tidy in $w$. Additionally, $\{1, i+1\}$ is not tidy in $g$, as $g(i+1)$ is not greater than $-1$. Finally, consider pairs of the form $\{1, j+1\}$ for $1 \leq j \leq k$. That such a pair is tidy requires that $g(j+1) > -1$, which is equivalent to $w(j) = j$ because $w$ is good, and also requires that $g(1) = -(i+1) > -(j+1)$. Thus, this pair is tidy in $g$ if and only if $w(j) = j$ and $j > i$.  There are $d(i, w)$ such pairs by definition, proving (5).\end{proof}

\begin{lemma}\label{lem:conj-by-tx}
Let $w \in B_k$ be a good involution, and let $x = ts_1\cdots s_k \in B_{k + 1}$. Then,
\[
T_{x} T_w T_{x^{-1}} = pq^{a(w)}T_{xwx^{-1}} + (1-p)q^{a(w)}T_{xwx^{-1}t} + (1-q)\sum_{\substack{1 \leq j \leq k \\ w(j) = j}} q^{d(j, w)}T_{xwx_j^{-1}}.
\]
\end{lemma}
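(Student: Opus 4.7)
The plan is to first reduce $T_x T_w T_{x^{-1}}$ to the two-factor product $T_{xw}\,T_{x^{-1}}$. A direct application of the length formula in Lemma \ref{lem:b-length} verifies that $\ell(xw) = \ell(x) + \ell(w)$ and $\ell(wx^{-1}) = \ell(w) + \ell(x^{-1})$ (relying on the fact that $w \in B_k$ fixes $k+1$ and on the specific structure of $x$ as a signed permutation, namely $x(i) = i+1$ for $1 \leq i \leq k$ and $x(k+1) = -1$). By Proposition \ref{prop:t-basis}, this yields $T_x T_w = T_{xw}$ and $T_w T_{x^{-1}} = T_{wx^{-1}}$, so that
\[
T_x T_w T_{x^{-1}} \;=\; T_{xw}\,T_{x^{-1}} \;=\; T_{xw}\,T_{s_k}\,T_{s_{k-1}}\cdots T_{s_1}\,T_t.
\]

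The main step is to compute this product iteratively, multiplying by $T_{s_{k-i+1}}$ at step $i$ for $i = 1, \ldots, k$, and finally by $T_t$ at step $k+1$. The central length claim is: along the ``main branch'' (which takes the $q$-side at every split), the step-$i$ multiplication causes a length decrease, and hence a split via the quadratic relation, precisely when $w(k-i+1) = k-i+1$ is a fixed point of $w$. If at such a step $i = k-j+1$ one instead takes the $(1-q)$-side (the ``side branch at $j$''), then all subsequent multiplications cause length increases and no further splits occur. The final multiplication by $T_t$ on the fully-reduced main branch always splits into $p$- and $(1-p)$-parts, since the terminal accumulator $U_{k+1} = xw \cdot s_k s_{k-1} \cdots s_1 = xwx^{-1}t$ satisfies $\ell(U_{k+1} \cdot t) = \ell(U_{k+1}) - 1$.

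This expansion yields exactly the target formula: the main branch contributes $p\, q^{a(w)} T_{xwx^{-1}} + (1-p)\, q^{a(w)} T_{xwx^{-1}t}$ (the $a(w)$ factors of $q$ coming from the splits at the $a(w)$ fixed-point steps), while for each fixed point $j$ of $w$ the side branch at $j$ contributes $(1-q)\, q^{d(j, w)} T_{xwx_j^{-1}}$ (with the $d(j, w)$ factors of $q$ coming from fixed-point splits at positions strictly greater than $j$, taken on the $q$-side before the skip at step $k-j+1$). The basis-element identifications follow from the reduced-expression calculations $xw \cdot s_k s_{k-1} \cdots s_1 \cdot t = xwx^{-1}$ and $xw \cdot s_k \cdots \widehat{s}_{k-i+1} \cdots s_1 \cdot t = xwx_j^{-1}$, together with a length check that these are reduced (using Lemma \ref{lem:b-length}).

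The main obstacle is verifying the length claims throughout the iteration. The characterization of the main-branch length change at step $i$ is established by computing $U_i(\alpha_{s_{k-i+1}})$ for the accumulator $U_i = xw \cdot s_k s_{k-1} \cdots s_{k-i+2}$: one shows $U_i(\alpha_{s_{k-i+1}}) = -e_1 - U_i(e_{k-i+1})$, which is a negative root precisely when $w(k-i+1) = k-i+1$ (in which case $U_i(e_{k-i+1}) = e_{k-i+2}$), and otherwise equals $-e_1 + e_{|w(k-i+1)|+1}$, a positive root. The ``no-split-after-skip'' property is checked by an analogous computation on the side-branch accumulator, verifying that the simple-root image lies in $\Phi^+$ at every subsequent step, including the final $T_t$ step where the image evaluates to $e_{k-i+2} \in \Phi^+$.
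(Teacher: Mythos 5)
Your proposal is correct and follows essentially the same route as the paper: both expand $T_{xw}T_{s_k}\cdots T_{s_1}T_t$ step by step, identify that a quadratic split occurs exactly at the fixed points of $w$ (giving the $q^{a(w)}$ and $q^{d(j,w)}$ factors), note that no further splits occur once a $(1-q)$-branch is taken, and handle the final $T_t$ multiplication with the $p$/$(1-p)$ split. The only difference is cosmetic bookkeeping: you certify the length increases/decreases via images of simple roots, while the paper does the equivalent check directly on the signed-permutation values and packages the iteration as an explicit downward induction (its Equation (3.1)-type claim).
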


\begin{proof}
To prove this statement, we will prove, for $1 \leq i \leq k + 1$, that:
\begin{equation}\label{eqn:claim}
T_x T_w T_{s_ks_{k-1}\cdots s_i} = q^{d(i-1, w)}T_{xws_k\cdots s_i} + (1-q)\sum_{\substack{i \leq j \leq k \\ w(j) = j}} q^{d(j, w)} T_{xws_k\cdots \hat s_j \cdots s_i}.
\end{equation}

Let us prove Equation \ref{eqn:claim} by (downward) induction on $i$. For the base case, we take $i = k+1$, which requires that $T_xT_w = T_{xw}$. This follows from the statement that $\ell(xw) = \ell(x) + \ell(w)$ for any $w \in B_k$.

Now, assume inductively that Equation \ref{eqn:claim} holds for some value of $i$ with $2 \leq i \leq k+1$, and consider $T_xT_wT_{s_k\cdots s_i} T_{s_{i-1}}$. By the inductive hypothesis, this is equal to
\begin{equation}\label{eq:ind_step_partial_product}
q^{d(i-1, w)}T_{xws_k\cdots s_i}T_{s_{i-1}} + (1-q)\sum_{i \leq j \leq k, w(j) = j} q^{d(j, w)} T_{xws_k\cdots \hat s_j \cdots s_i}T_{s_{i-1}}.
\end{equation}

Let $j$ be some integer such that $i \leq j \leq k$ and $w(j) = j$, and consider the element $xws_k\cdots \hat s_j \cdots s_i$. This element maps $i$ to $j+1$, and $i-1$ to $xw(i-1)$. But $w(i-1) \leq i-1$, since $w$ is good, so $xw(i-1) \leq i$. However, $i < j+1$. Therefore, $xws_k\cdots \hat s_j \cdots s_is_{i-1}$ is longer than $xws_k\cdots \hat s_j \cdots s_i$, and thus
\[
T_{xws_k\cdots \hat s_j \cdots s_i}T_{s_{i-1}} = T_{xws_k\cdots \hat s_j \cdots s_{i-1}}.
\]

Now, consider $T_{xws_k\cdots s_i}T_{s_{i-1}}$. We see that $xws_k\cdots s_i(i) = xw(k+1) = -1$ and that $xws_k\cdots s_i(i-1) = xw(i-1)$. Since $w$ is good, we have two cases: $w(i-1) < 0$ and $w(i-1) = i-1$.

In the first case, $xw(i-1) < -1 = xw(k+1)$, so $xws_k\cdots s_is_{i-1}$ is longer than $xws_k\cdots s_i$. Thus,
\[
T_{xws_k\cdots s_i}T_{s_{i-1}} = T_{xws_k\cdots s_is_{i-1}}.
\]
In this case, we have overall that
\[
T_xT_wT_{s_k\cdots s_i} T_{s_{i-1}} = q^{d(i-1, w)}T_{xws_k\cdots s_is_{i-1}} + (1-q)\sum_{i \leq j \leq k, w(j) = j} q^{d(j, w)} T_{xws_k\cdots \hat s_j \cdots s_is_{i-1}}.
\]
But $w(i-1) \neq i-1$, so $d(i-1, w) = d(i-2, w)$. Making this substitution, we get Equation \ref{eqn:claim} for parameter $i-1$, completing the inductive step in this case.

In the second case, $xw(i-1) = i > -1$, so $xws_k\cdots s_is_{i-1}$ is shorter than $xws_k\cdots s_i$. This means that
\[
T_{xws_k\cdots s_i}T_{s_{i-1}} = qT_{xws_k\cdots s_is_{i-1}} + (1-q)T_{xws_k\cdots s_i\hat s_{i-1}}.
\]
But $d(i-2, w) = d(i-1, w)+1$ in this case, so using the above equality in Equation \ref{eq:ind_step_partial_product} gives Equation \ref{eqn:claim} for $i-1$, and completing the induction in all cases.

Now, take $i = 1$ in Equation \ref{eqn:claim}, and consider $T_xT_wT_{x^{-1}} = T_xT_wT_{s_k\cdots s_1}T_t$. This is 
\[
q^{d(0, w)}T_{xwx^{-1}t}T_t + (1-q)\sum_{\substack{1 \leq j \leq k \\ w(j) = j}} q^{d(j, w)} T_{xwx_j^{-1}t}T_t.
\]
For $1 \leq j \leq k$ with $w(j) = j$, the element $xwx_j^{-1}t$ maps $1$ to $j+1$, and thus $xwx_i^{-1}$ is longer than $xwx_i^{-1}t$. Therefore, $T_{xwx_j^{-1}t}T_t = T_{xwx_j^{-1}}$. Meanwhile, $xwx^{-1}t$ maps $1$ to $-1$, so $xwx^{-1}$ is shorter than $xwx^{-1}t$. Therefore, $T_{xwx^{-1}t}T_t = pT_{xwx^{-1}} + (1-p)T_{xwx^{-1}t}$. Substituting in these expressions, we find that
\[
T_xT_wT_{x^{-1}} = pq^{d(0, w)}T_{xwx^{-1}} + (1-p)q^{d(0, w)}T_{xwx^{-1}t} + (1-q)\sum_{\substack{1 \leq j \leq n \\ w(j) = j}} q^{d(j, w)}T_{xwx_j^{-1}}.
\]
As $a(w) = d(0, w)$, we find the desired expression.\end{proof}

Note that the $T_{w'}, w' \in B_{k + 1}$, appearing with nonzero coefficient in the sum in Lemma \ref{lem:conj-by-tx} are exactly those in $\Succ(w)$. Further, using Lemma \ref{lem:abc-succ}, we see that Lemma \ref{lem:conj-by-tx} can be rephrased as follows:

\begin{corollary}\label{cor:really-obvious}
We have that 
\[
T_xT_wT_{x^{-1}} = \sum_{w' \in \Succ(w)} p^{\alpha(w')} (1-p)^{\beta(w')} q^{\gamma(w')} (1-q)^{\delta(w')} T_{w'},
\]
where $\alpha(w') = \frac{k+1+a(w')-a(-w')}{2} - \frac{k+a(w)-a(-w)}{2}$, $\beta(w')= a(-w') - a(-w)$, $\gamma(w') = c(w') - c(w)$, and $\delta(w') = \frac{k+1-a(w')-a(-w')}{2} - \frac{k-a(w)-a(-w)}{2}$.
\end{corollary}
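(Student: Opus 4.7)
The plan is to derive Corollary \ref{cor:really-obvious} from Lemma \ref{lem:conj-by-tx} by a direct case-by-case verification, using the statistics established in Lemma \ref{lem:abc-succ} to show that the coefficient of each $T_{w'}$, originally expressed in terms of invariants of $w$, can be reexpressed in terms of invariants of $w'$ with the prescribed exponents $\alpha(w'),\beta(w'),\gamma(w'),\delta(w')$.

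Concretely, Lemma \ref{lem:conj-by-tx} already tells us exactly which $T_{w'}$ appear and with what coefficients, and the first observation of the remark preceding Corollary \ref{cor:really-obvious} identifies the index set of nonzero terms with $\Succ(w)$. So what remains is to check, for each of the three types of successors, that the coefficient claimed by Lemma \ref{lem:conj-by-tx} agrees with $p^{\alpha(w')}(1-p)^{\beta(w')}q^{\gamma(w')}(1-q)^{\delta(w')}$. I would process the three cases $w'=xwx^{-1}$, $w'=xwx^{-1}t$, and $w'=xwx_j^{-1}$ (for $j$ with $w(j)=j$) in turn, in each case substituting the increments
\[
a(w')-a(w),\quad a(-w')-a(-w),\quad c(w')-c(w)
\]
provided by Lemma \ref{lem:abc-succ} into the definitions of $\alpha,\beta,\gamma,\delta$ and comparing with the coefficient $pq^{a(w)}$, $(1-p)q^{a(w)}$, or $(1-q)q^{d(j,w)}$ from Lemma \ref{lem:conj-by-tx}. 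For instance, for $w'=xwx^{-1}$ one computes $\alpha(w')=1$, $\beta(w')=0$, $\gamma(w')=a(w)$, $\delta(w')=0$, matching $pq^{a(w)}$; the other two cases are analogous, with the observation $a(w)=d(0,w)$ handling the $T_{xwx^{-1}t}$ case and $\gamma(w')=d(j,w)$ being exactly what the third case demands.

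There is no real obstacle here: the statement is a cosmetic rewriting of Lemma \ref{lem:conj-by-tx} engineered so that the exponents depend only on $w'$ (and $k$), which is the form needed in order to iterate the formula when later conjugating longer elements. The only thing to be careful about is arithmetic with the $(k+1)$ vs.\ $k$ shift in the definitions of $\alpha$ and $\delta$: one should check that each increment of the form $\tfrac{(k+1)\pm a(w')\pm a(-w')}{2}-\tfrac{k\pm a(w)\pm a(-w)}{2}$ evaluates to an integer in $\{0,1\}$ as expected, which is exactly where the parities of $a(w')+a(-w')$ and $a(w)+a(-w)$ (both having the same parity as $k$ and $k+1$ respectively, since good involutions pair off the non-fixed indices with their negatives) come in implicitly.
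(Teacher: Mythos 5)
Your proposal is correct and follows essentially the same route as the paper, which presents the corollary as a direct rephrasing of Lemma \ref{lem:conj-by-tx} via the increments in Lemma \ref{lem:abc-succ}; your three case checks (yielding exponents $(1,0,a(w),0)$, $(0,1,a(w),0)$, and $(0,0,d(j,w),1)$) are exactly the verification the paper leaves implicit.
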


We can now prove Theorem \ref{thm:w0k-decomposition}.

\begin{proof}[Proof of Theorem \ref{thm:w0k-decomposition}]
The proof is by induction on $k$, with $k = 1$ as the base case.  The base case is trivial: for $k = 1$ we have $w_{0, k} = t$ and therefore $T_{w_{0, k}}^2 = pT_1 + (1-p)T_t$, matching the desired expression.

Now, assume for the sake of induction that this theorem holds for some fixed $k \geq 1$. Then, note that $w_{0, k+1} = w_{0, k}x^{-1} = xw_{0, k}$, where $x = ts_1\cdots s_k$ is as above, and that $\ell(w_{0, k+1}) =\ell(w_{0, k}) + \ell(x)$.

Then, by the inductive hypothesis,
\[
T_{w_{0, k+1}}^2 = T_xT_{w_{0, k}}^2T_{x^{-1}} = \sum_{w \in G_k} p^{\frac{k+a(w)-a(-w)}{2}}(1-p)^{a(-w)}q^{c(w)}(1-q)^{\frac{k-a(w)-a(-w)}{2}}T_xT_wT_{x^{-1}}.
\]
By Corollary \ref{cor:really-obvious}, we have that for each $w \in G_k$,
\[
T_xT_wT_{x^{-1}} = \sum_{w' \in \Succ(w)} p^{\alpha(w')} (1-p)^{\beta(w')} q^{\gamma(w')} (1-q)^{\delta(w')} T_{w'},
\]
where $\alpha(w') = \frac{k+1+a(w')-a(-w')}{2} - \frac{k+a(w)-a(-w)}{2}$, $\beta(w')= a(-w') - a(-w)$, $\gamma(w') = c(w') - c(w)$, and $\delta(w') = \frac{k+1-a(w')-a(-w')}{2} - \frac{k-a(w)-a(-w)}{2}$.  Together, we then have: 
\[
T_{w_{0, k+1}}^2 =  \sum_{w \in G_k} \sum_{w' \in \Succ(w)} p^{\frac{k+1+a(w')-a(-w')}{2}}(1-p)^{a(-w')}q^{c(w')}(1-q)^{\frac{k+1-a(w')-a(-w')}{2}}T_{w'}.
\]
But by Proposition \ref{prop:succ}, $\coprod_{w \in G_k} \Succ(w) = G_{k+1}$, and the theorem follows.\end{proof}

\subsection{A Recurrence for \texorpdfstring{$\triv(z_{w_{0, k}})$}{triv(z\_w\_0,k)}}
Now, we will use Theorem \ref{thm:w0k-decomposition} to establish a recurrence for the action of $z_{w_{0, k}}$ in the trivial representation $\mathbb{C}_\triv$ of $H_q(S_k)$.

Note that $w_{0, k} = -w_0'$, where $w_0'$ is the longest element of $S_k$. Thus, $w \in B_k$ is in the coset $S_kw_{0, k} = -S_kw_0' = -S_k$ if and only if $-w \in S_k$, i.e. if and only if $w(i) < 0$ for each $1 \leq i \leq k$. For $w \in G_k$, this is equivalent to $a(w) = 0$. Recalling that $T_{w_{0, k}}^2 = \sum_{x \in X_{0, k}} z_xT_x$ for some unique coefficients $z_x \in H_q(S_k)$, where $X_{0, k}$ is the set of distinguished right coset representatives of $S_k$ in $B_k$, we can now see from Theorem \ref{thm:w0k-decomposition} that
\[
z_{w_{0, k}}T_{w_{0, k}} = \sum_{\substack{w \in G_k \\ a(w) = 0}} (p(1-q))^{\frac{k-a(-w)}{2}}(1-p)^{a(-w)}q^{c(w)}T_w.
\]
Reindexing this sum using the equality $\{w \in G_k : a(w) = 0\} = \{-w : w \in S_k, w^2 = 1\}$ and multiplying on the right by $T_{w_{0, k}}^{-1}$, we have:
\[
z_{w_{0, k}} = \sum_{\substack{w \in S_k \\ w^2 = 1}} (p(1-q))^{\frac{k-a(w)}{2}}(1-p)^{a(w)}q^{c(-w)}T_{ww_0'}.
\]
Note that for $w \in S_k$ with $w^2 = 1$, a pair $\{i, j\}$ is tidy in $-w$ if and only if $w(j) < i$ and $w(i) < j$. We will call a pair $\{i , j\}$ that is tidy in $-w$ \emph{neat} in $w$. 

Let $\triv: H_q(S_k) \to \mathbb{C}$ to be the trivial character of $H_q(S_k)$, which satisfies $\triv(T_w) = 1$ for each $w \in S_k$.

\begin{definition}
For $k \geq 1$, let $f_k(p, q)$ be the polynomial
\[
\text{\emph{triv}}(z_{w_{0, k}}) = \sum_{\substack{w \in S_k \\ w^2 = 1}} (p(1-q))^{\frac{k-a(w)}{2}}(1-p)^{a(w)}q^{c(-w)}.
\]
\end{definition}

\begin{lemma}\label{lem:z-recurrence}
We have $f_1(p, q) = 1-p$, $f_2(p, q) = 1 - (1+q)p + p^2$, and 
\[
f_k = p(1-q^{k-1})f_{k-2} + (1-p)f_{k-1}
\] for $k \geq 3$.
\end{lemma}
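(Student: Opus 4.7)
The plan is to verify the two base cases by direct enumeration and establish the recurrence by partitioning involutions $w \in S_k$ according to the value of $w(k)$.

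For $k = 1$ the only involution is the identity, with $a(w) = 1$ and no neat pairs, giving $f_1 = 1 - p$. For $k = 2$ the identity contributes $(1-p)^2$ and the transposition $s_1$ contributes $p(1-q)$ (the unique candidate pair $\{1,2\}$ fails the strict neatness inequality in both cases), summing to $f_2 = 1 - (1+q)p + p^2$.

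For the inductive step with $k \geq 3$, partition involutions by $w(k)$. In \emph{Case 1}, $w(k) = k$, and $w$ restricts to an involution $w' \in S_{k-1}$ with $a(w') = a(w) - 1$. Any pair $\{i, k\}$ with $i < k$ fails neatness since $w(k) = k \not< i$, so the neat pairs of $w$ coincide with those of $w'$, and this case contributes $(1-p) f_{k-1}$ to $f_k$.

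In \emph{Case 2}, $w(k) = j$ for some $1 \leq j \leq k-1$. Deleting $j, k$ and reindexing via the order-preserving bijection $\phi \colon \{1, \ldots, k\} \setminus \{j, k\} \to \{1, \ldots, k-2\}$ produces an involution $w' \in S_{k-2}$ with $a(w') = a(w)$. The neat pairs of $w$ decompose as: (i) those disjoint from $\{j, k\}$, in bijection via $\phi$ with the $c(-w')$ neat pairs of $w'$ (neatness being an order-theoretic condition on positions and values, which $\phi$ preserves); (ii) pairs $\{a, k\}$ with $j < a < k$, of which there are $k-1-j$ (each is neat since $w(k) = j < a$ and $w(a) \neq k$ forces $w(a) < k$); (iii) no others, because $w(j) = k$ exceeds every element of $\{1, \ldots, k-1\}$, ruling out pairs $\{a, j\}$ with $a \neq k$, and $\{j, k\}$ itself fails $w(j) = k < k$. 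Combined with the identity $(k - a(w))/2 = ((k-2) - a(w'))/2 + 1$, the contribution from all $w$ with $w(k) = j$ equals $p(1-q)\,q^{k-1-j} f_{k-2}$; summing the geometric series $\sum_{j=1}^{k-1} q^{k-1-j} = (1 - q^{k-1})/(1-q)$ yields $p(1 - q^{k-1}) f_{k-2}$.

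The main obstacle is the neat-pair bookkeeping in Case 2: each sub-case requires checking a pair of strict inequalities against the re-indexed values, and one must verify both that $\phi$ preserves neatness on pairs disjoint from $\{j, k\}$ and that the extra neat pairs $\{a, k\}$ with $j < a < k$ are the only new ones introduced, as these are precisely the source of the $q$-dependence on $j$ that collapses into the geometric series $(1-q^{k-1})/(1-q)$.
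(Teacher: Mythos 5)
Your proof is correct and takes essentially the same approach as the paper: both verify the two base cases directly and establish the recurrence by partitioning involutions of $S_k$ according to whether $k$ is fixed, restricting to an involution of $S_{k-1}$ or $S_{k-2}$, and showing the only new neat pairs are the $\{a,k\}$ with $w(k) < a < k$, whose count $k-1-w(k)$ produces the geometric series summing to $(1-q^{k-1})/(1-q)$.
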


\begin{proof}
The formulas for $f_1$ and $f_2$ follow immediately from the definition of $f_k$.

Now, fix some $k \geq 3$. Let $w \in S_k$, $w^2 = 1$.  Either $w$ fixes $k$ or not.  In the first case, $w$ restricted to the set $\{1, \ldots, k-1\}$ determines an involution $w' \in S_{k-1}$. Clearly, $a(w) = a(w') + 1$. Consider a pair $\{j, j'\}$ of distinct integers with $1 \leq j, j' \leq k-1$. Since $w'(j) = w(j)$ and $w'(j') = w(j')$, this pair is neat in $w'$ if and only if it is neat in $w$. Pairs of the form $\{j, k\}$ with $1 \leq j \leq k-1$ are never neat in $w$, since $w(k)$ is not less than any such $j$, so $c(-w) = c(-w')$. Therefore, 
\[
\sum_{\substack{w \in S_k \\ w^2 = 1 \\ w(k) = k}} (p(1-q))^{\frac{k-a(w)}{2}}(1-p)^{a(w)}q^{c(-w)} = (1-p)f_{k-1}.
\]

Now, consider the case that $w(k) \neq k$.  Let $w(k) = i$. Then, let $g$ be the restriction of $w$ to $\{1, \ldots, k - 1\} \backslash \{i\}$. The element $g$ is an involution on this set, and we will consider the quantities $a(g)$ and $c(g)$ defined as usual, viewing $g$ as an element of $S_{k - 2}$ via the order preserving bijection between $\{1, \ldots, k - 1\} \backslash \{i\}$ and $\{1, \ldots, k - 2\}$.  The element $w$ fixes neither $i$ nor $k$, and we see that $a(w) = a(g)$. For any pair $\{j, j'\}$ of distinct  integers such that $1 \leq j, j' \leq k-1$ and $j, j' \neq i$, we see similarly that $\{j, j'\}$ is neat in $w$ if and only if it is neat in $g$. Pairs $\{i, j\}$ cannot be neat, as $w(i) = k \geq j$ for any $1 \leq j \leq k$. Finally, the pair $\{j, k\}$, where $1 \leq j < k$, is neat in $w$ if and only if $w(k) = i < j$ and $w(j) < k$.  The first condition is met by all those $j$ strictly between $i$ and $k$, and such $j$ also satisfy the second condition $w(j) < k$ because $w^{-1}(k) = i$.  There are $k - i - 1$ such $j$, so we have:
\[
\sum_{i = 1}^{k-1}\sum_{\substack{w \in S_k \\ w^2 = 1 \\ w(k) = i}} (p(1-q))^{\frac{k-a(w)}{2}}(1-p)^{a(w)}q^{c(-w)} = \sum_{i = 1}^{k-1}p(1-q)q^{k-i-1}f_{k-2}.
\]
Combining the two cases above, we get
\[
f_k = (1-p)f_{k-1} + p(1-q)f_{k-2}\sum_{i = 1}^{k-1} q^{k-i-1} = (1-p)f_{k-1} + p(1-q^{k-1})f_{k-2},
\]
as desired.\end{proof}

\subsection{Separated Sets}

\begin{definition}
Fix a positive integer $k$. Call $S$ a \emph{separated $k$-set} if it is a subset of $\{0, 1, \ldots, k-1\}$ such that for each pair of distinct values $i, j \in S$, we have $1 < |i-j| < k-1$. Let $\Sep_k$ be the set of all separated $k$-sets, and $\Sep_k^+$ be the set of all separated $k$-sets that do not contain $0$.  For any integer $r$ and separated $k$-set $S \in \Sep_k$, let $S + r \in \Sep_k$ be the separated $k$-set given by $$S + r := \{i \in \{0, 1, \ldots, k-1\} \mid i \equiv s+r \mod k \text{ for some } s \in S\}.$$\end{definition}

\begin{lemma}\label{prop:fn-separated}
For each $k \geq 1$, we have
\[
f_k = \sum_{S \in \Sep_k} p^{|S|}(1-p)^{k-2|S|}\prod_{s \in S}(1-q^s).
\]
\end{lemma}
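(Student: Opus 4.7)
The plan is to prove the identity by induction on $k$, showing that the right-hand side, which I denote by
\[
g_k(p,q) := \sum_{S \in \Sep_k} p^{|S|}(1-p)^{k-2|S|}\prod_{s \in S}(1-q^s),
\]
satisfies the same initial conditions and recurrence as $f_k$ established in Lemma \ref{lem:z-recurrence}. The base cases $k = 1, 2$ are direct: for $k = 1$ the separated set $\{0\}$ contributes nothing because of the factor $1 - q^0 = 0$, leaving only $g_1 = 1 - p$; for $k = 2$ all pairs are forbidden by $|i-j| < 1$, and summing over $\emptyset, \{0\}, \{1\}$ gives $g_2 = (1-p)^2 + p(1-q) = 1 - (1+q)p + p^2$.

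For the inductive step with $k \geq 3$, the idea is to partition $\Sep_k$ according to whether $k - 1 \in S$. In the case $k-1 \notin S$, the upper-bound condition $|i-j| < k-1$ becomes automatic, so $S$ runs over independent sets in the path on $\{0, 1, \ldots, k-2\}$. The vanishing $1 - q^0 = 0$ kills any contribution with $0 \in S$, leaving a sum over independent sets in the path $\{1, \ldots, k-2\}$. Performing the analogous decomposition of $g_{k-1}$ (split on whether $0 \in S$, killing the $0$-branch the same way) reduces $g_{k-1}$ to the same indexing sum but with exponent $(k-1) - 2|S|$ on $(1-p)$ rather than $k - 2|S|$; extracting the extra factor identifies this case with $(1-p)\, g_{k-1}$.

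In the case $k - 1 \in S$, the separation conditions force both $k - 2 \notin S$ and $0 \notin S$, so $S \setminus \{k-1\}$ runs over independent sets in the path $\{1, \ldots, k-3\}$. Pulling out the factor $p(1-q^{k-1})$ coming from the element $k - 1$ leaves a sum with exponent $(k-2) - 2|S'|$ on $(1-p)$. An analogous decomposition of $g_{k-2}$ by whether $0 \in T$ identifies that polynomial with the same path sum (the $0$-branch again killed by $1 - q^0 = 0$), so this case contributes precisely $p(1-q^{k-1})\, g_{k-2}$. Summing the two cases yields $g_k = (1-p) g_{k-1} + p(1-q^{k-1}) g_{k-2}$, matching the recurrence of Lemma \ref{lem:z-recurrence} and closing the induction.

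The main bookkeeping point, and the step most likely to cause slips, is tracking the exponents of $(1-p)$ across the three polynomials $g_k, g_{k-1}, g_{k-2}$ under their respective decompositions: the offsets must combine to produce precisely the prefactor $(1-p)$ in the first case and no prefactor in the second. I would also verify directly the degenerate small cases $k = 3, 4$, in which the paths $\{1, \ldots, k-3\}$ or $\{1, \ldots, k-2\}$ have at most one vertex, to confirm that the identification between the reduced $g_{k-2}$ (resp.\ $g_{k-1}$) and the relevant path sum remains valid at the boundary of the induction.
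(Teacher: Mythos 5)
Your proof is correct and follows essentially the same route as the paper: both arguments show that the right-hand side satisfies the initial conditions and the recurrence of Lemma \ref{lem:z-recurrence} by splitting the sum over $\Sep_k$ according to whether $k-1 \in S$, with the factor $1-q^0 = 0$ eliminating sets containing $0$. The only difference is cosmetic bookkeeping: the paper passes to $\Sep_k^+$ once at the outset, while you discard the element $0$ within each branch and track the $(1-p)$-exponents explicitly.
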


\begin{proof}
Let 
\[
g_k = \sum_{S \in \Sep_k} p^{|S|}(1-p)^{k-2|S|}\prod_{s \in S}(1-q^s)
\]
as above. Note that as $(1-q^0) = 0$, we may replace $\Sep_k$ with $\Sep_k^+$ in the above sum without changing its value.

We will prove the equality of $f_k$ and $g_k$ by showing that they each satisfy the recurrence and initial conditions appearing in Lemma \ref{lem:z-recurrence}.

First, consider $k = 1$. There are two separated $1$-sets, namely the empty set and $\{0\}$. But $(1-q^0) = 0$, so $g_1 = (1-p) = f_1$. Now, consider $k = 2$. There are three separated $2$-sets: the empty set, $\{0\}$, and $\{1\}$. These give us $g_2 = (1-p)^2 + p(1-q) = 1 - (1+q)p + p^2 = f_2$.

Next, fix $k \geq 3$.  Let $S \in \Sep_k^+$. Then, either $S$ contains $k-1$ or it does not. In the first case, $S \backslash \{k-1\}$ is a subset of $\{0, \ldots, k-3\}$. Further since $S$ does not contain $0$, it is not possible for two elements of $S \backslash \{k-1\}$ to differ by $k-3$. Therefore, $S \backslash \{k-1\} \in \Sep_{k - 2}^+$. Additionally, for any $S' \in \Sep_{k - 2}^+$ we have $S' \cup \{k-1\} \in \Sep_k^+$. We therefore have:
\[
\sum_{\substack{S \in \Sep_k^+ \\ k-1 \in S}} p^{|S|}(1-p)^{k-2|S|}\prod_{s \in S}(1-q^s) = p(1-q^{k-1})\sum_{S \in \Sep_{k-2}^+} p^{|S|}(1-p)^{k-2|S|}\prod_{s \in S}(1-q^s),
\]
which is $p(1-q^{k-1})g_{k-2}$.

If $S$ does not contain $k-1$, then $S \in \Sep_{k - 1}^+$, and clearly this determines a bijection between $\Sep_{k - 1}^+$ and the set of $S \in \Sep_k^+$ not containing $k - 1$.  In particular, we have:
\[
\sum_{\substack{S \in \Sep_k^+ \\ k-1 \not\in S}} p^{|S|}(1-p)^{k-2|S|}\prod_{s \in S}(1-q^s) = (1-p)\sum_{S \in \Sep_{k-1}^+} p^{|S|}(1-p)^{k-1-2|S|}\prod_{s \in S}(1-q^s),
\]
which is $(1-p)g_{k-1}$.

Combining the above sums, we see that $g_k = p(1-q^{k-1})g_{k-2} + (1-p)g_{k-1}$, the same recurrence appearing for $f_k$, $k \geq 3$, in Lemma \ref{lem:z-recurrence}.  As $f_1 = g_1$ and $f_2 = g_2$, we have $f_k = g_k$ for all $k \geq 1$, as needed.\end{proof}

\begin{lemma}\label{lem:fk-separated}
If $q$ is a primitive $k^{th}$ root of unity, we have:
\[
f_k = \sum_{S \in \Sep_k} p^{|S|}(1-p)^{k-2|S|}.
\]
\end{lemma}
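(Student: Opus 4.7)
The plan is to start from the identity given in Lemma \ref{prop:fn-separated},
\[
f_k = \sum_{S \in \Sep_k} p^{|S|}(1-p)^{k-2|S|}\prod_{s \in S}(1-q^s),
\]
and reduce the claim to proving that, for each $m$ with $0 \leq m \leq \lfloor k/2 \rfloor$,
\[
\sum_{\substack{S \in \Sep_k \\ |S| = m}} \prod_{s \in S}(1-q^s) \;=\; |\{S \in \Sep_k : |S| = m\}|.
\]
Once this identity is known, grouping the sum in Lemma \ref{prop:fn-separated} by the value of $|S|$ and substituting immediately yields the desired formula.

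To prove the identity I would exploit the cyclic symmetry of $\Sep_k$. A subset $S \subseteq \{0, 1, \ldots, k-1\}$ is separated if and only if it contains no pair of cyclically adjacent elements, i.e., $S$ is an independent set in the cycle graph $C_k$. In particular, the shift action $S \mapsto S + r$ of $\mathbb{Z}/k\mathbb{Z}$ preserves $\Sep_k$ and preserves $|S|$. The idea is to evaluate $\sum_{S} \prod_{s \in S}(1-q^s)$ orbit by orbit.

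Fix an orbit $O$ in $\{S \in \Sep_k : |S| = m\}$ and a representative $S_0 \in O$. Expanding the product and using $q^k = 1$, the key computation is
\[
\sum_{r=0}^{k-1} \prod_{s \in S_0}(1-q^{s+r}) \;=\; \sum_{T \subseteq S_0}(-1)^{|T|}\,q^{\sum_{t \in T} t}\,\sum_{r=0}^{k-1} q^{r|T|}.
\]
Since $q$ is a primitive $k$-th root of unity and $0 \leq |T| \leq m \leq \lfloor k/2 \rfloor < k$, the inner geometric sum vanishes unless $T = \emptyset$, leaving a total of $k$. Dividing by $|\mathrm{Stab}(S_0)| = k/|O|$ shows $\sum_{S \in O} \prod_{s \in S}(1-q^s) = |O|$, and summing over all orbits of size-$m$ separated sets gives $|\{S \in \Sep_k : |S| = m\}|$, as required.

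The main obstacle is mostly conceptual: recognizing $\Sep_k$ as the collection of independent sets in $C_k$ and setting up the cyclic averaging correctly. Once that is in place, the rest is a routine vanishing computation for geometric sums of roots of unity, with the only subtle point being to confirm the bound $|T| < k$ in all relevant cases so the geometric sum really does vanish outside $T = \emptyset$.
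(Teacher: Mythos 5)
Your proposal is correct and follows essentially the same route as the paper: starting from Lemma \ref{prop:fn-separated}, exploiting the shift action $S \mapsto S+r$ on $\Sep_k$, expanding $\prod_{s}(1-q^{s+r})$ over subsets, and using that $\sum_{r=0}^{k-1} q^{r|T|}$ vanishes for $0 < |T| \leq \lfloor k/2 \rfloor < k$ when $q$ is a primitive $k$th root of unity. The only difference is cosmetic: you average orbit by orbit with stabilizers, while the paper averages globally with a factor of $\frac{1}{k}$ over all shifts at once.
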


\begin{proof}
The claim holds for $k = 1$, so assume $k \geq 2$.  The map $S \mapsto S + r$ for any integer $r$ determines a bijection $\Sep_k \rightarrow \Sep_k$. Therefore,
\[
f_k(p, q) = \frac{1}{k}\sum_{r = 0}^{k-1} \sum_{S \in \Sep_k} p^{|S|}(1-p)^{k-2|S|}\prod_{s \in S+r}(1-q^s).
\]
Switching the order of summation, we have
\begin{equation}\label{eq:partial-computation-for-fk-separated-lemma}
f_k(p, q) = \sum_{S \in \Sep_k} \frac{1}{k}p^{|S|}(1-p)^{k-2|S|}\sum_{r = 0}^{k - 1}\prod_{s \in S+r}(1-q^s).
\end{equation}
Expanding the product $\prod_s (1-q^s)$ gives
\[
\prod_{s \in S} (1-q^{s+r}) = \sum_{S' \subset S} (-1)^{|S'|}q^{|S'|r + \left(\sum_{s \in S'} s\right)}.
\]
We may switch the order of summation again to obtain
\[
\sum_{r = 0}^{k-1}\prod_{s \in S}(1-q^{s+r}) = \sum_{S' \subset S} (-1)^{|S'|}q^{\left(\sum_{s \in S'} s\right)}\sum_{r = 0}^{k-1}q^{|S'|r}.
\]
But as $S \in \Sep_k$ and $k \geq 2$, we have $|S'| \leq |S| \leq \lfloor k/2 \rfloor$, and thus, as $q$ is a primitive $k^{th}$ root of unity, $\sum_{r = 0}^{k - 1} q^{|S'|r}$ equals $0$ if $|S'| \neq 0$ and $k$ if $|S'| = 0$.  The claim now follows from Equation \ref{eq:partial-computation-for-fk-separated-lemma}.\end{proof}

We will take the convention for binomial coefficients that $\binom{n}{t} = 0$ whenever $n \geq 0 > t$ or $t > n \geq 0$.

\begin{lemma}\label{lem:separated-count}
There are $\binom{k-i}{i} + \binom{k-i-1}{i-1}$ separated $k$-sets with cardinality $i$.
\end{lemma}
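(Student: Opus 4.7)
The plan is to partition the separated $k$-sets of cardinality $i$ according to whether $0 \in S$, producing the two summands on the right-hand side. The key observation is that a separated $k$-set is exactly a subset of $\{0,1,\dots,k-1\}$ containing no two cyclically consecutive elements: the condition $1 < |i-j| < k-1$ forbids precisely the pairs differing by $\pm 1$ on the line and the single pair $\{0, k-1\}$, which are exactly the pairs adjacent in the cyclic ordering on $\mathbb{Z}/k$. So the lemma is equivalently a count of independent sets of size $i$ in the cycle graph $C_k$.

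The main tool I would invoke is the standard "gap-counting" bijection: the number of $i$-subsets of $\{1,\dots,n\}$ with no two consecutive elements is $\binom{n-i+1}{i}$, via the map $\{a_1 < \cdots < a_i\} \mapsto \{a_1, a_2 - 1, \dots, a_i - (i-1)\}$, which sends such sets bijectively onto all $i$-subsets of $\{1,\dots,n-i+1\}$.

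For the first case, if $0 \notin S$, then $S \subseteq \{1,\dots,k-1\}$ and the condition $|j-j'| < k-1$ is automatic (it would require $\{0, k-1\} \subseteq S$). So $S$ is exactly an $i$-subset of the chain $\{1,\dots,k-1\}$ of $k-1$ consecutive integers with no two consecutive, of which there are $\binom{(k-1)-i+1}{i} = \binom{k-i}{i}$. For the second case, if $0 \in S$, then the constraints $|j| \geq 2$ and $|j| \leq k-2$ for $j \in S \setminus \{0\}$ force $1, k-1 \notin S$, so $S \setminus \{0\}$ is an $(i-1)$-subset of $\{2,\dots,k-2\}$ (a chain of $k-3$ consecutive integers) with no two consecutive, contributing $\binom{(k-3)-(i-1)+1}{i-1} = \binom{k-i-1}{i-1}$ by the same formula.

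Summing the two cases yields the claimed count. The argument is purely enumerative, so I do not anticipate any real obstacle; the only minor point to check is that the binomial conventions stated just before the lemma handle the degenerate small-$k$ or large-$i$ cases correctly (for instance, when $2i > k$ so no separated set of size $i$ exists, both summands vanish).
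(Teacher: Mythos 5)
Your proof is correct and takes essentially the same approach as the paper: both partition the separated $k$-sets of size $i$ according to whether they contain $0$, reduce each class to counting non-adjacent $i$- (resp. $(i-1)$-) subsets of a path, and obtain the two binomial summands. The only difference is cosmetic—the paper encodes sets as 01-sequences and uses the substitution $1 \mapsto 01$, while you use the standard gap-shifting bijection, which are equivalent devices.
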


\begin{proof}
Identify subsets of $\{0, \ldots, k - 1\}$ with sequence of 0's and 1's of length $k$; the subset $S \subset \{0, \ldots, k - 1\}$ corresponds to the sequence $(\epsilon_0, \ldots, \epsilon_{k - 1})$ where $\epsilon_i = 1$ if $i \in S$ and $\epsilon_i = 0$ otherwise.  Under this identification, the cardinality of a set equals the number of 1's in the associated sequence, and a subset $S$ is in $\Sep_k$ if and only if it has no adjacent 1's and does not both begin and end with a 1.  Given a 01-sequence of length $k - i$ containing exactly $i$ 1's, the sequence obtained by replacing all 1's with the sequence 01 determines a sequence corresponding to a separated $k$-set of cardinality $i$ not containing 0, and clearly this is a bijection.  Given a 01-sequence of length $k - i - 1$ containing exactly $i - 1$ 1's, the sequence obtained by appending a 1 on the left and replacing all 1's with 01 corresponds to a separated $k$-set of cardinality $i$ containing 0, and clearly this is a bijection as well.  The claim follows.\end{proof}

\begin{lemma}\label{lem:binom}
Let $k \geq i \geq 0$. Then,
\[
\sum_{j = 0}^i (-1)^j\binom{k-2j}{i-j}\binom{k-j}{j} = 1.
\]
\end{lemma}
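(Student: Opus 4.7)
The plan is to deduce the identity from a polynomial identity whose left-hand side, when expanded, yields the required sum as the coefficient of a single monomial. For each $k \geq 0$ define
\[
P_k(x) := \sum_{j \geq 0} (-1)^j \binom{k-j}{j}\, x^j (1+x)^{k-2j},
\]
and observe that by the binomial theorem the coefficient $[x^i]P_k(x)$ is exactly $\sum_{j=0}^{i} (-1)^j \binom{k-j}{j}\binom{k-2j}{i-j}$, which is precisely the left-hand side of the identity to be proved. Hence it suffices to establish the polynomial identity $P_k(x) = 1 + x + x^2 + \cdots + x^k$ for all $k \geq 0$, since the coefficient of $x^i$ on the right is $1$ for each $0 \leq i \leq k$.

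I would prove this polynomial identity by induction on $k$, with the base cases $P_0(x) = 1$ and $P_1(x) = 1 + x$ both immediate from the definition. The inductive step consists of verifying the three-term recurrence
\[
P_k(x) = (1+x)\, P_{k-1}(x) - x\, P_{k-2}(x),
\]
from which the desired conclusion follows because the polynomials $1 + x + \cdots + x^k$ obviously satisfy the same recurrence with the same initial values. To check that $P_k$ satisfies this recurrence, one expands the right-hand side, reindexes the sum coming from $x\, P_{k-2}(x)$ via $j \mapsto j-1$ so that every term has the shape $x^j(1+x)^{k-2j}$, and then applies Pascal's rule $\binom{k-j}{j} = \binom{k-j-1}{j} + \binom{k-j-1}{j-1}$ to combine the two resulting coefficients of $x^j(1+x)^{k-2j}$; the $j = 0$ term is handled separately and matches trivially.

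The only real work is setting up the reindexing cleanly in the recurrence verification, after which Pascal's rule does everything and the remainder is routine bookkeeping. I do not anticipate any serious obstacle beyond this careful accounting of indices, and the convention $\binom{n}{t} = 0$ for $t < 0$ or $t > n \geq 0$ stated just before the lemma ensures that all edge cases (for instance, the vanishing of $\binom{k-j}{j}$ when $j > k/2$) are handled automatically.
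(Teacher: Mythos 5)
Your proof is correct, but it takes a genuinely different route from the paper's. The paper rewrites each product $\binom{k-2j}{i-j}\binom{k-j}{j}$ as a trinomial coefficient $\binom{k-j}{j,\,i-j,\,k-i-j}$ and runs a double induction on $(k,i)$ with base cases $i=0$ and $i=k$, using the trinomial analogue of Pascal's rule to obtain $S_{k,i} = -S_{k-2,i-1} + S_{k-1,i-1} + S_{k-1,i}$ and hence $S_{k,i}=1$. You instead treat all $i$ at once through the generating polynomial $P_k(x)=\sum_j(-1)^j\binom{k-j}{j}x^j(1+x)^{k-2j}$ and prove $P_k(x)=1+x+\cdots+x^k$ by a single induction on $k$ via the three-term recurrence $P_k=(1+x)P_{k-1}-xP_{k-2}$, which needs only ordinary Pascal; this is the classical Chebyshev/Fibonacci-type identity attached to $t^2-(1+x)t+x=(t-1)(t-x)$, so one could even skip the induction and quote the closed form $\bigl(1-x^{k+1}\bigr)/(1-x)$. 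Your argument proves a slightly stronger statement (all coefficients of $P_k$ simultaneously) with less index bookkeeping, while the paper's stays inside elementary binomial manipulations and never introduces a formal variable. One small cleanup: as written, your sum over all $j\geq 0$ includes terms with $k-2j<0$, where $(1+x)^{k-2j}$ is not a polynomial, and for $j>k$ the top entry $k-j$ is negative, a case the paper's convention does not address; since $\binom{k-j}{j}=0$ for $k/2<j\leq k$, you should simply restrict the sum to $0\leq j\leq\lfloor k/2\rfloor$, after which everything you outline goes through.
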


\begin{proof}  First, note that $\binom{k-2j}{i-j}\binom{k-j}{j}$ equals the trinomial coefficient $$\binom{k-j}{j, i-j, k-i-j} := \frac{(k - j)!}{j!(i - j)!(k - i - j)!},$$ and let $S_{k, i}$ denote the value of the sum on the lefthand side in the lemma statement.

We will prove the lemma by induction on $k$ and $i$ with base cases $k = i$ and $i = 0$. For $i = 0$, we have
\[
S_{k, 0} = \binom{k}{0, 0, k} = 1.
\]
In the case $k = i$, the term $\binom{k-j}{j, i-j, k-i-j}$ is only nonzero when $j = 0$, as otherwise $k-i-j = -j$ is negative. Thus, $S_{k, k}$ collapses to the single term $\binom{k}{0, k, 0} = 1$.

Now, let $k > i > 0$ and assume for that for any $k' \geq i' \geq 0$ such that $k > k'$ or $i > i'$ we have $S_{k', i'} = 1$. By Pascal's identity we have that $S_{k, i}$ equals
\[
\sum_{j = 0}^i (-1)^j\left[\binom{k-j-1}{j-1, i-j, k-i-j} + \binom{k-j-1}{j, i-j-1, k-i-j} + \binom{k-j-1}{j, i-j, k-i-j-1}\right].
\]
Shifting the index of the leftmost sum by $1$, we find that this is equal to 
\[
S_{k, i} = -S_{k-2, i-1} + S_{k-1, i-1} + S_{k-1, i}.
\]
As $k > i > 0$ it follows that $k-2 \geq i-1 \geq 0$, so by the inductive hypothesis
\[
S_{k, i} = -1 + 1 + 1 = 1,
\]
as needed.\end{proof}

We can now prove Theorem \ref{thm:main} in the case $n = 0$:

\begin{proof}[Proof of Theorem \ref{thm:main} for $n = 0$]  Let $k \geq 2$ and let $q$ be a primitive $k^{th}$ root of unity.  To prove the theorem statement for $n = 0$, we need to show that $f_k(p, q) = 1 + (-p)^k$.

Rephrasing Lemma \ref{lem:fk-separated}, we have:
\[
f_k(p, q) = \sum_{i = 0}^{\lfloor k/2 \rfloor} p^i(1-p)^{k-2i}|\{S \in \Sep_k \mid |S| = i\}|.
\]
By Lemma \ref{lem:separated-count}, $|\{S \in \Sep_k \mid |S| = i\}| = \binom{k-i}{i} + \binom{k-i-1}{i-1}$. Substituting this value and expanding $(1-p)^{k-2i}$ gives:
\[
f_k(p, q) = \sum_{i = 0}^{\lfloor k/2 \rfloor} \sum_{j = 0}^{k-2i} (-1)^jp^{i+j} \binom{k-2i}{j}\left(\binom{k-i-1}{i-1} + \binom{k-i}{i}\right).
\]
For any $l$ in the range $0 \leq l \leq \lfloor k/2 \rfloor$, the aggregate coefficient of $p^l$ in the righthand expression above is
\[
\sum_{i = 0}^l (-1)^{l-i} \binom{k-2i}{l-i}\binom{k-i-1}{i-1} + \sum_{i = 0}^l (-1)^{l-i}\binom{k-2i}{l-i}\binom{k-i}{i}.
\]
By Lemma \ref{lem:binom}, we have
\[
\sum_{i = 0}^l (-1)^{l-i}\binom{k-2i}{l-i}\binom{k-i}{i} = (-1)^l.
\]
Also, note that $\binom{k-2i}{l-i}\binom{k-i-1}{i-1} = \binom{(k-2) - 2(i-1)}{(l-1) - (i-1)}\binom{(k-2) - (i-1)}{i-1}$. Thus, if $k-2 \geq l-1 \geq 0$, we have by Lemma \ref{lem:binom} that
\[
\sum_{i = 0}^l (-1)^{l-i} \binom{k-2i}{l-i}\binom{k-i-1}{i-1} = -(-1)^l.
\]
Therefore, for each $l$ such that $1 \leq l \leq \lfloor k/2\rfloor$, the coefficient of $p^l$ in $f_k(p, q)$ is $0$. At $l = 0$, however, 
\[
\sum_{i = 0}^l (-1)^{l-i} \binom{k-2i}{l-i}\binom{k-i-1}{i-1}
\]
is $0$.

Finally, note that since $f_k(p, q) = \sum_{i = 0}^{\lfloor k/2 \rfloor} c_ip^i(1-p)^{k-2i}$ for coefficients $c_i$ independent of $p$, we have that $(-p)^kf_k(p^{-1}, q) = f_k(p, q)$. Thus, for each $l$ with $\lfloor k/2 \rfloor \leq l \leq k-1$, the coefficient of $p^l$ is $0$, and the coefficient of $p^k$ is $(-1)^k$. In particular, $f_k(p, q) = 1 + (-p)^k$, as desired.\end{proof}

\section{General Case}\label{sec:general}
In this section we will prove Theorem \ref{thm:main} by induction on $n$, with the base case $n = 0$ established in the previous section.

Recall the element $w_{n, k} \in X_{n, k} \subset B_{n + k}$. We have $w_{n, k}(i) = i$ for $1 \leq i \leq n$ and $w_{n, k}(n + i) = -(n+k+1-i)$ for $1 \leq i \leq k$.  In particular, by Lemma \ref{lem:b-length}, $\ell(w_{n, k}) = 2nk + \binom{k+1}{2}$.

Let $c$ be the element $s_{n+1}\cdots s_{n+k} \in B_{n+k+1}$. We have that $c(i) = i$ for $1 \leq i \leq n$, $c(n + k + 1) = n + 1$, and $c(n + i) = n + i + 1$ for $1 \leq i \leq k$.  Furthermore, $cw_{n, k}c^{-1} = w_{n+1, k}$ and $\ell(w_{n+1, k}) = 2nk + 2k + \binom{k+1}{2} = \ell(c) + \ell(w_{n, k}) + \ell(c^{-1})$, so $T_{w_{n+1, k}} = T_cT_{w_{n, k}}T_{c^{-1}}$.

\begin{lemma}\label{thm:tcinv-tc}
$T_{c^{-1}}T_c = q^kT_1 + (1-q)\sum_{i = 1}^k q^{i-1}T_{s_{n+k}\cdots s_{n+i}\cdots s_{n+k}}$.
\end{lemma}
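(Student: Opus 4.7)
The plan is to apply the quadratic relation $T_s^2 = qT_1 + (1-q)T_s$ (valid because every $s_i$ carries the same Hecke parameter $q$ in $H_{p,q}(B_{n+k+1})$) to the central pair $T_{s_{n+1}}T_{s_{n+1}}$ of the product $T_{c^{-1}}T_c$, and then peel off pairs one at a time, working outward to produce a clean one-step linear recursion.

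To set things up, since $c = s_{n+1}s_{n+2}\cdots s_{n+k}$ and $c^{-1} = s_{n+k}\cdots s_{n+1}$ are reduced expressions, Proposition \ref{prop:t-basis} gives
\[
T_{c^{-1}}T_c = T_{s_{n+k}}\cdots T_{s_{n+2}}T_{s_{n+1}}T_{s_{n+1}}T_{s_{n+2}}\cdots T_{s_{n+k}}.
\]
For $1 \leq j \leq k$ define
\[
P_j := T_{s_{n+k}}\cdots T_{s_{n+j+1}}T_{s_{n+j}}T_{s_{n+j}}T_{s_{n+j+1}}\cdots T_{s_{n+k}},
\]
with the convention that products whose index ranges are reversed are empty (so for example $P_k = T_{s_{n+k}}T_{s_{n+k}}$); and set $P_{k+1} := T_1$. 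Then $P_1 = T_{c^{-1}}T_c$, and substituting $T_{s_{n+j}}^2 = qT_1 + (1-q)T_{s_{n+j}}$ into the center of $P_j$ gives
\[
P_j = qP_{j+1} + (1-q)\, T_{s_{n+k}}\cdots T_{s_{n+j+1}}T_{s_{n+j}}T_{s_{n+j+1}}\cdots T_{s_{n+k}}.
\]

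The key step is to show that the residual product above equals $T_{w_{(j)}}$ for $w_{(j)} := s_{n+k}\cdots s_{n+j+1}s_{n+j}s_{n+j+1}\cdots s_{n+k}$, which by Proposition \ref{prop:t-basis} reduces to verifying that the stated expression for $w_{(j)}$ is reduced. A direct computation shows that $w_{(j)}$ acts on $\{1,\ldots, n+k+1\}$ as the transposition $(n+j,\, n+k+1)$; as this is an unsigned permutation, Lemma \ref{lem:b-length} gives $\ell(w_{(j)}) = 2(k-j)+1$, exactly the number of simple reflections appearing in the expression. (Alternatively, $s_{n+j},\ldots,s_{n+k}$ generate a parabolic subgroup of type $A_{k-j+1}$, and this is a standard fact about reduced expressions in the symmetric group.) With this identification the recursion becomes $P_j = qP_{j+1} + (1-q)T_{w_{(j)}}$.

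Unrolling from $j = 1$ through $j = k$ yields
\[
T_{c^{-1}}T_c = P_1 = q^k T_1 + (1-q)\sum_{j=1}^{k} q^{j-1}\, T_{w_{(j)}},
\]
which, after renaming $j$ to $i$, is exactly the asserted formula. I expect the only substantive obstacle to be the length check identifying $w_{(j)}$ as the transposition $(n+j,\, n+k+1)$ and confirming the reduced-expression claim; once that is in hand, the rest is a clean unfolding of a one-step recursion.
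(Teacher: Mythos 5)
Your proposal is correct and follows essentially the same route as the paper: both expand the central squared generator via the quadratic relation $T_{s}^2 = qT_1 + (1-q)T_s$ and then use reducedness of the palindromic words $s_{n+k}\cdots s_{n+i}\cdots s_{n+k}$ to identify the residual products with basis elements $T_w$, differing only in bookkeeping (the paper grows the partial products $T_{s_{n+i}\cdots s_{n+1}}T_{s_{n+1}\cdots s_{n+i}}$ outward by induction, while you unroll a one-step recursion and verify reducedness once via the transposition/length computation). Your length check $\ell(w_{(j)}) = 2(k-j)+1$ is correct, so the argument goes through.
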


\begin{proof}
We will prove this by induction, showing that, for each $0 \leq i \leq k$, we have that 
\begin{equation}\label{eqn:c}
T_{s_{n+i}\cdots s_{n+1}}T_{s_{n+1}\cdots s_{n+i}} = q^iT_1 + (1-q)\sum_{j = 1}^i q^{j-1}T_{s_{n+i}\cdots s_{n+j}\cdots s_{n+i}}
\end{equation}  The case $i = k$ is the claim of the lemma.

The base case $i = 0$, in which case Equation \ref{eqn:c} reads $T_1 = T_1$, is clear. Now, assume that Equation \ref{eqn:c} holds for some fixed $i \geq 0$.  We then have 
\[T_{s_{n+i+1}\cdots s_{n+1}}T_{s_{n+1}\cdots s_{n+i+1}} = q^iT_{s_{n + i+1}}^2 + (1-q)\sum_{j = 1}^i q^{j-1}T_{s_{n + i+1}}T_{s_{n+i}\cdots s_{n+j}\cdots s_{n+i}}T_{s_{n + i+1}}.
\]
By definition, $T_{s_{n + i+1}}^2 = qT_1 + (1-q)T_{s_{n + i+1}}$. Additionally, $s_{n+i+1}\cdots s_{n+j} \cdots s_{n+i+1}$ is reduced. Thus,
\[
T_{s_{n+i+1}\cdots s_{n+1}}T_{s_{n+1}\cdots s_{n+i+1}} = q^{i+1}T_1 + (1-q)q^iT_{s_{n+i+1}} + (1-q)\sum_{j = 1}^i q^{j-1}T_{s_{n+i+1}\cdots s_{n+j}\cdots s_{n+i+1}},
\]
completing the induction.\end{proof}

Now, note that an element $w \in B_{n+k}$ is in the coset $(B_n \times S_k)w_{n, k}$ if and only if $w(n+i) < -n$ for each $1 \leq i \leq k$. This implies that $|w(i)| \leq n$ for each $1 \leq i \leq n$.

\begin{lemma}\label{lem:baby-succ}
For any $w \in (B_n \times S_k)w_{n, k}$, $\ell(cwc^{-1}) = \ell(c) + \ell(w) + \ell(c^{-1})$ and $cwc^{-1} \in (B_{n+1} \times S_k)w_{n+1, k}$. Further, for any $w \in (B_{n+1} \times S_k)w_{n+1, k}$, we have $c^{-1}wx \not\in B_{n+k}$ for any $x < c$ in the Bruhat ordering, and $c^{-1}wc \in (B_n \times S_k)w_{n, k}$ if and only if $cwc^{-1} \in B_{n+k}$.
\end{lemma}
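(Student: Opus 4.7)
The plan is to factor each $w$ as (small element)$\cdot$(distinguished representative) and to exploit the explicit identity $cw_{n,k}c^{-1}=w_{n+1,k}$ given just above the lemma; all three assertions then reduce to short signed-permutation computations plus the length formula from Lemma~\ref{lem:b-length}. For the first assertion, write $w=uw_{n,k}$ with $u\in B_n\times S_k$ and $u=u_1u_2$, $u_1\in B_n$, $u_2\in S_k$. Direct computation with the given formulas for $c,c^{-1}$ shows $cu_1c^{-1}=u_1$ (since $c$ fixes $\{\pm1,\ldots,\pm n\}$ pointwise and $u_1$ acts only there), while $cu_2c^{-1}$ is the isomorphic copy of $u_2$ in the parabolic $S_k\subset B_{n+1}\times S_k$ permuting $\{n+2,\ldots,n+k+1\}$. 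These two images have disjoint supports in $B_{n+1}\times S_k$, so lengths add: $\ell(cuc^{-1})=\ell(u)$. Hence $cwc^{-1}=(cuc^{-1})w_{n+1,k}\in(B_{n+1}\times S_k)w_{n+1,k}$ with $\ell(cwc^{-1})=\ell(u)+\ell(w_{n+1,k})$, and comparing with $\ell(c)+\ell(w)+\ell(c^{-1})=k+\ell(u)+\ell(w_{n,k})+k$ uses only $\ell(w_{n+1,k})-\ell(w_{n,k})=2k$ from Lemma~\ref{lem:b-length}.

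For the statement about $x<c$, I would first note that $c=s_{n+1}s_{n+2}\cdots s_{n+k}$ is the unique reduced expression for $c$ (no braid or commutation move applies), so the Bruhat interval $\{x : x\leq c\}$ consists exactly of subwords $s_{n+i_1}\cdots s_{n+i_r}$ with $i_1<\cdots<i_r$. A short orbit calculation shows that applying such a subword to $n+k+1$ (rightmost first, with strictly decreasing indices) drops the value by $1$ exactly at matching indices and otherwise fixes it, so $x(n+k+1)=n+1$ forces all of $s_{n+1},\ldots,s_{n+k}$ to appear, i.e.\ $x=c$. For $x<c$ this gives $x(n+k+1)=n+j$ with $2\leq j\leq k+1$, and the coset condition $w(n+j)<-(n+1)$ then forces $c^{-1}wx(n+k+1)=c^{-1}(w(n+j))$ to be negative, hence $\neq n+k+1$, so $c^{-1}wx\notin B_{n+k}$.

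For the biconditional I would interpret the statement as \emph{$c^{-1}wc\in(B_n\times S_k)w_{n,k}$ iff $c^{-1}wc\in B_{n+k}$} (the literal condition $cwc^{-1}\in B_{n+k}$ is never satisfied for $w\in(B_{n+1}\times S_k)w_{n+1,k}$, since $w(n+k)<-(n+1)$ always). The forward direction is tautological; for the reverse, writing $w=uw_{n+1,k}$ and using $c^{-1}w_{n+1,k}c=w_{n,k}$ gives $c^{-1}wc=(c^{-1}uc)w_{n,k}$, and membership in $B_{n+k}$ forces $w(n+1)=n+1$, hence $u(n+1)=n+1$, placing $u$ in the fixes-$(n+1)$ subgroup $B_n\times S_k^{\mathrm{shifted}}\subset B_{n+1}\times S_k$; reversing the first-part conjugation calculation then yields $c^{-1}uc\in B_n\times S_k$. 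The main obstacle is keeping straight the various copies of $B_n$, $B_{n+1}$, and the two $S_k$'s sitting inside $B_{n+k+1}$ and the way $c$ shuttles between them; beyond that the argument is bookkeeping with Lemma~\ref{lem:b-length}.
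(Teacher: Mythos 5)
Your argument is correct, and it diverges from the paper's proof mainly in the first assertion. For membership and the length identity, you factor $w=uw_{n,k}$ with $u\in B_n\times S_k$, conjugate factor by factor (using $cu_1c^{-1}=u_1$ for $u_1\in B_n$ and the shift $s_{n+i}\mapsto s_{n+i+1}$ on the $S_k$ part), invoke $cw_{n,k}c^{-1}=w_{n+1,k}$, and then use additivity of length for a parabolic element times a distinguished coset representative together with $\ell(w_{n+1,k})-\ell(w_{n,k})=2k$; the paper instead verifies the coset condition directly on the signed permutation ($cwc^{-1}(n+1)=n+1$, $cwc^{-1}(n+1+i)<-(n+1)$) and counts inversions and negated entries via Lemma~\ref{lem:b-length}. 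Your route is arguably cleaner since it isolates the length bookkeeping in facts already stated in the background section, at the cost of needing $\ell(cu_2c^{-1})=\ell(u_2)$ and length additivity across the two factors of the parabolic, both of which you justify adequately. For the second and third assertions your argument essentially coincides with the paper's: both exploit that $c=s_{n+1}\cdots s_{n+k}$ has a unique reduced expression, so Bruhat-smaller elements are proper subwords with $x(n+k+1)>n+1$, and both reduce membership of $c^{-1}wx$ in $B_{n+k}$ to tracking the image of $n+k+1$; your reverse implication for the biconditional (forcing $u(n+1)=n+1$ and conjugating back) matches the paper's observation that $w(n+1)=n+1$ already yields $c^{-1}wc\in(B_n\times S_k)w_{n,k}$. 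You also correctly diagnosed the typo in the statement: the condition should read $c^{-1}wc\in B_{n+k}$ (the literal $cwc^{-1}\in B_{n+k}$ never holds for $w$ in the given coset), and this is exactly what the paper's own proof establishes.
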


\begin{proof}
Let $w \in (B_n \times S_k)w_{n, k}$. Then, viewing $w$ as an element of $B_{n+k+1}$, $w(n+k+1) = n+k+1$. Thus, $cwc^{-1}(n+1) = n+1$. Additionally, for any $1 \leq i \leq k$, $cwc^{-1}(n+1+i) = cw(n+i) < -(n+1)$. Thus, $cwc^{-1} \in (B_{n+1}\times S_k)w_{n+1, k}$. Also, note that for any $1 \leq i < j \leq n$, $cwc^{-1}(i) = w(i)$ and $cwc^{-1}(j) = w(j)$, so $i$ and $j$ are inverted in $w$ if and only if they are inverted in $cwc^{-1}$. Similarly, for any $1 \leq i < j \leq k$, $cwc^{-1}(n+i+1) = w(n+i)-1$ and $cwc^{-1}(n+j+1) = w(n+j)-1$, so $n+i$ and $n+j$ are inverted in $w$ if and only if $n+i+1$ and $n+j+1$ are inverted in $cwc^{-1}$. In $w$, each pair $(i, j)$ such that $1 \leq i \leq n < j \leq n+k$ is inverted, whereas in $cwc^{-1}$ this is true for each pair $1 \leq i \leq n+1 < j \leq n+k+1$. Finally, we note that the sum of the negated elements of $cwc^{-1}$ is $k$ greater than the sum of negated elements in $w$. It follows that $\ell(cwc^{-1}) = 2k + \ell(w) = \ell(c) + \ell(w) + \ell(c^{-1})$, giving the first statement.

Next, recall that an element $w$ of $B_{n+k+1}$ is an element of $B_{n+k}$ exactly if it fixes $n+k+1$. Let $x$ be some element of $B_{n+k+1}$ such that $x \leq c$ in the Bruhat ordering. We have $c^{-1}wx \in B_{n+k}$ if and only if $w^{-1}c(n+k+1) = x(n+k+1)$.  Note that $w^{-1}c(n+k+1) = w^{-1}(n+1) \leq n+1$. The element $c$ has a unique reduced expression, given by $c = s_{n + 1} \cdots s_{n + k}$, and therefore if $x$ is strictly less than $c$ in the Bruhat ordering then $x$ is given by a proper subexpression of $s_{n + 1} \cdots s_{n + k}$ and therefore satisfies $x(n + k + 1) > n + 1$.  In particular, $x(n+k+1) \geq n+1$, with equality if and only if $x = c$. From these observations, we see that $c^{-1}wx \in B_{n+k}$ if and only if $x = c$ and $w(n+1) = n+1$. But in this case, $c^{-1}wc \in (B_n \times S_k)w_{n, k}$, as desired.\end{proof}

\begin{lemma}\label{lem:double-coset}
Let $w \in B_{n+k}$, let $w' \in B_n \times S_k$, and let $a_x \in H_{p, q}(B_n \times S_k)$, $x \in X_{n, k}$, be the unique coefficients such that
\[
T_wT_{w'} = \sum_{x \in X_{n, k}} a_xT_x.
\]
Then, $a_{w_{n, k}} = 0$ unless $w \in (B_n \times S_k)w_{n, k}$.
\end{lemma}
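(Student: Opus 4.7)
The plan is to work with the left-module decomposition $H_{p,q}(B_{n+k}) = \bigoplus_{x \in X_{n,k}} H_{p,q}(B_n \times S_k) T_x$ from Proposition \ref{prop:free-module} and the projection $\phi: H_{p,q}(B_{n+k}) \to H_{p,q}(B_n \times S_k)$ onto the $T_{w_{n,k}}$-component. The claim $a_{w_{n,k}} = 0$ is equivalent to $\phi(T_w T_{w'}) = 0$, and the map $\phi$ is left $H_{p,q}(B_n \times S_k)$-linear by construction.

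First I would reduce to the case $w \in X_{n,k}$. Writing $w = vy$ with $v \in B_n \times S_k$, $y \in X_{n,k}$, and $\ell(w) = \ell(v) + \ell(y)$, we have $T_w T_{w'} = T_v (T_y T_{w'})$, so $\phi(T_w T_{w'}) = T_v \phi(T_y T_{w'})$. Since $w \in (B_n \times S_k) w_{n,k}$ if and only if $y = w_{n,k}$, it suffices to show $\phi(T_y T_{w'}) = 0$ for every $y \in X_{n,k}$ with $y \neq w_{n,k}$ and every $w' \in B_n \times S_k$.

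I would then proceed by induction on $\ell(w')$. The base case $w' = 1$ is immediate because $T_y$ lies in $M := \ker \phi$ whenever $y \neq w_{n,k}$. For the inductive step, write $w' = \tilde{w} s$ with $s$ simple in $B_n \times S_k$ and $\ell(\tilde{w}) = \ell(w') - 1$, so $T_y T_{w'} = (T_y T_{\tilde{w}}) T_s$ and the induction hypothesis gives $T_y T_{\tilde{w}} \in M$. Because $M = \bigoplus_{x \in X_{n,k}, x \neq w_{n,k}} H_{p,q}(B_n \times S_k) T_x$ is a left $H_{p,q}(B_n \times S_k)$-submodule, stability of $M$ under right multiplication by $T_s$ reduces to checking $T_x T_s \in M$ for each $x \in X_{n,k}$ with $x \neq w_{n,k}$. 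By Proposition \ref{prop:t-basis}, $T_x T_s$ is a $\mathbb{C}$-linear combination of $T_{xs}$ and $T_x$; the summand $T_x$ already lies in $M$, so the remaining task is to show $T_{xs} \in M$, equivalently that $xs \notin (B_n \times S_k) w_{n,k}$.

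The main (though shallow) obstacle is this final length inequality. Since $w_{n,k}$ is the distinguished (shortest) representative of its right coset, every element of $(B_n \times S_k) w_{n,k}$ has length at least $\ell(w_{n,k})$; and since $w_{n,k}$ is the longest element of $X_{n,k}$, we have $\ell(xs) \leq \ell(x) + 1 \leq \ell(w_{n,k})$. Consequently $xs \in (B_n \times S_k) w_{n,k}$ forces $xs = w_{n,k}$, making $s$ a right descent of $w_{n,k}$. But $w_{n,k}$ is an involution, so its right and left descent sets agree, and the defining property of $X_{n,k}$ forces the left descents of $w_{n,k}$ to lie outside the simple reflections of $B_n \times S_k$, contradicting $s$ being such a simple reflection.
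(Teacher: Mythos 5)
Your argument is correct, but it reaches the conclusion by a different route than the paper. The paper's proof is essentially two lines: since $w_{n,k}$ normalizes $B_n \times S_k$ (it centralizes $B_n$ and conjugates $s_{n+i}$ to $s_{n+k-i}$), the left coset $(B_n \times S_k)w_{n,k}$ is a full double coset $(B_n \times S_k)w_{n,k}(B_n \times S_k)$, and $H_{p,q}(B_{n+k})$ decomposes as an $H_{p,q}(B_n\times S_k)$-bimodule into the spans of the $T_u$ over each double coset; the component supported on $(B_n\times S_k)w_{n,k}$ of $T_wT_{w'}$ is therefore zero whenever $w$ lies outside that coset. You instead work only with the one-sided decomposition of Proposition \ref{prop:free-module} and prove by induction on $\ell(w')$ that $M=\ker\phi$ is stable under right multiplication by $T_s$ for $s$ simple in $B_n\times S_k$, replacing the normalization fact with a length/descent argument: $xs\in (B_n\times S_k)w_{n,k}$ would force $xs=w_{n,k}$, hence a right descent of the involution $w_{n,k}$ inside $B_n\times S_k$, contradicting $w_{n,k}\in X_{n,k}$. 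This is in effect a hands-on re-derivation, in the special case needed, of the sub-bimodule property that the paper invokes wholesale; it is more elementary and self-contained, at the cost of length, while the paper's version exposes the structural reason (the coset is a double coset). One small point you should justify: your inequality $\ell(x)+1\le\ell(w_{n,k})$ for $x\in X_{n,k}\setminus\{w_{n,k}\}$ uses that $w_{n,k}$ is the \emph{unique} element of maximal length in $X_{n,k}$; this is standard and implicit in the paper's identity $w_{n,k}=w_0'w_0$, since $\ell(w_0'x)=\ell(w_0')+\ell(x)\le\ell(w_0)$ with equality only when $w_0'x=w_0$, but as stated your chain of inequalities silently assumes it.
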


\begin{proof}
As $(B_n \times S_k)w_{n, k} = (B_n \times S_k)w_{n, k}(B_n \times S_k)$, this follows immediately from the direct sum decomposition
\[
H_{p, q}(B_n \times S_k) = \bigoplus_{x \in (B_n \times S_k)\backslash B_{n + k}/(B_n \times S_k)} \left( \bigoplus_{w \in x} \mathbb{C}T_w\right)
\]
of $H_{p, q}(B_{n + k})$ as a $H_{p, q}(B_n \times S_k)$-bimodule.\end{proof}

Now, note that $w_{n, k}$ and $c$ both fix each integer $i$ such that $1 \leq i \leq n$. Thus, for each $w \in B_n$, $ww_{n, k} = w_{n, k}w$, and $cw = wc$, with $\ell(ww_{n, k}) = \ell(w) + \ell(w_{n, k})$ and $\ell(wc) = \ell(w) + \ell(c)$. Further, note that for each $1 \leq i \leq k-1$, $w_{n, k}s_{n+i} = s_{n+k-i}w_{n, k}$ and $cs_{n+i} = s_{n+i+1}c$. We have that $\ell(w_{n, k}s_{n+i}) = \ell(w_{n, k}) + 1$ and $\ell(cs_{n+i}) = \ell(c) + 1$ for each $1 \leq i \leq k-1$.

Now, we are ready to prove the main theorem.

\begin{proof}[Proof of Theorem \ref{thm:main}]
The proof is by induction on $n$, with the base case $n = 0$ treated in Section \ref{sec:base-case}.

Assume that the theorem holds for some fixed $n \geq 0$ and $k \geq 2$.  We will show that it holds for $n+1$ and $k$ as well. Let $c = s_{n+1}\cdots s_{n+k}$, as above. Then $T_{w_{n+1, k}} = T_cT_{w_{n, k}}T_{c^{-1}}$, so
\[
T_{w_{n+1, k}}^2 = T_cT_{w_{n, k}}T_{c^{-1}}T_{c}T_{w_{n,k}}T_{c^{-1}}.
\]
From Lemma \ref{thm:tcinv-tc}, we have that $T_{c^{-1}}T_c = q^kT_1 + (1 - q)\sum_i q^{i-1}T_{s_{n+k}\cdots s_{n+i}\cdots s_{n+k}}$. Thus,
\[
T_{w_{n+1, k}}^2 = q^kT_cT_{w_{n, k}}^2T_{c^{-1}} + (1-q)\sum_{i = 1}^k q^{i-1}T_cT_{w_{n, k}}T_{s_{n+k}\cdots s_{n+i}\cdots s_{n+k}}T_{w_{n, k}}T_{c^{-1}}.
\]
Let $a_x \in H_{p, q}(B_{n+1} \times S_k)$, for $x \in X_{n+1, k}$, be the unique coefficients such that
\[
T_cT_{w_{n, k}}^2T_{c^{-1}} = \sum_{x \in X_{n+1, k}} a_xT_x,
\]
and let $f_{i, x} \in H_{p, q}(B_{n + 1} \times S_k)$, for $x \in X_{n+1, k}$, be the unique coefficients such that
\[
T_cT_{w_{n, k}}T_{s_{n+k}\cdots s_{n+i}\cdots s_{n+k}}T_{w_{n, k}}T_{c^{-1}} = \sum_{x \in X_{n+1, k}} f_{i, x}T_x.
\]
By these definitions, $z_{w_{n+1, k}} = q^ka_{w_{n+1, k}} + (1-q)\sum_{i=1}^k q^{i-1}f_{i, w_{n+1, k}}$.

Now, consider $T_cT_{w_{n, k}}^2T_{c^{-1}}$. Let $z_x' \in H_{p, q}(B_n \times S_k)$, $x \in X_{n, k}$, be the coefficients such that 
\[
T_{w_{n, k}}^2 = \sum_{x \in X_{n, k}} z_x'T_x.
\]
Then, $T_cT_{w_{n, k}}^2T_{c^{-1}} = \sum_x T_c z_x'T_xT_{c^{-1}}$. Then, note that $c$ commutes with $B_n$ and for any $w \in \langle s_{n+1}, \ldots, s_{n+k-1}\rangle$, $cw = w'c$ for some $w' \in \langle s_{n+2}, \ldots, s_{n+k}\rangle$, where $\ell(cw) = \ell(c) + \ell(w) = \ell(w') + \ell(c)$. Then, for any $w_1 \in B_n$ and $w_2 \in \langle s_{n+1}, \ldots, s_{n+k-1}\rangle$, $1_\triv \otimes T_cT_{w_1}T_{w_2} = 1_\triv \otimes T_{w_2'}T_{w_1}T_c$ for some $w_2' \in \langle s_{n+2}, \ldots, s_{n+k}\rangle$, and hence $1_\triv \otimes T_cT_{w_1}T_{w_2} = 1_\triv \otimes T_{w_1}T_c.$  Therefore, there exist elements $z_x'' \in H_{p, q}(B_{n+1} \times S_k)$ such that $T_cz_x' = z_x''T_c$ and such that $1_\triv \otimes z_x'' = 1_\triv \otimes z_x'$, viewed as elements of $H_{p, q}(B_{n + 1})$. We have
\[
T_cT_{w_{n, k}}^2T_{c^{-1}} = \sum_{x \in X_{n, k}} z_x''T_cT_xT_{c^{-1}},
\]
and by Lemma \ref{lem:baby-succ} we have $a_{w_{n+1, k}} = z_{w_{n+1, k}}''$. Using the inductive hypothesis, $1_\triv \otimes a_{w_{n+1, k}} = 1_\triv \otimes z_{w_{n,k}}'' = 1_\triv \otimes z_{w_{n, k}}' = 1_\triv \otimes (1+(-p)^k)T_1$.

Now, consider the product
\begin{equation}\label{eqn:this-expr}
T_cT_{w_{n, k}}T_{s_{n+k}\cdots s_{n+i}\cdots s_{n+k}}T_{w_{n, k}}T_{c^{-1}}
\end{equation}
for any $1 \leq i < k$.
Note that $s_{n+k}\cdots s_{n+i}\cdots s_{n+k} = s_{n+i}\cdots s_{n+k}\cdots s_{n+i}$ and these expressions are unique, so the product in (\ref{eqn:this-expr}) is also equal to
\[
T_cT_{w_{n, k}}T_{s_{n+i}\cdots s_{n+k}\cdots s_{n+i}}T_{w_{n, k}}T_{c^{-1}}.
\]
Now, note that $w_{n, k}s_{n+i}\cdots s_{n+k-1} = s_{n+k-i}\cdots s_{n+1}w_{n, k}$, and these expressions are reduced, so (\ref{eqn:this-expr}) is furthermore equal to
\[
T_cT_{s_{n+k-i}\cdots s_{n+1}}T_{w_{n, k}}T_{s_{n+k}}T_{w_{n, k}}T_{s_{n+1}\cdots s_{n+k-i}}T_{c^{-1}}.
\]
Also, note that $cs_{n+k-i}\cdots s_{n+1} = s_{n+k-i+1}\cdots s_{n+2}c$. As before, $\ell(cs_{n+k-i}\cdots s_{n+1}) = \ell(c) + k-i$.  Applying this to the above product, we obtain
\[
T_{s_{n+k-i+1}\cdots s_{n+2}}T_cT_{w_{n, k}}T_{s_{n+k}}T_{w_{n, k}}T_{c^{-1}}T_{s_{n+2}\cdots s_{n+k-i+1}}.
\]
But by the definition of $f_{k, x}$, we see that this equals
\[
T_{s_{n+k-i+1}\cdots s_{n+2}}\sum_{x \in X_{n+1, k}} f_{k, x}T_xT_{s_{n+2}\cdots s_{n+k-i+1}}.
\]
Tensoring over $H_q(S_k)$ on the left with $1_\triv$, we obtain
\[
1_\triv \otimes T_cT_{w_{n, k}}T_{s_{n+k}\cdots s_{n+i}\cdots s_{n+k}}T_{w_{n, k}}T_{c^{-1}} = 1_\triv \otimes \sum_{x \in X_{n+1, k}}f_{k, x}T_xT_{s_{n+2}\cdots s_{n+k-i+1}}.
\]
By Lemma \ref{lem:double-coset} we then have
\[
1_\triv \otimes f_{i, w_{n+1, k}}T_{w_{n+1, k}} = 1_\triv \otimes f_{k, w_{n+1, k}}T_{w_{n+1, k}}T_{s_{n+2}\cdots s_{n+k-i+1}}.
\]
But $T_{w_{n+1, k}}T_{s_{n+2}\cdots s_{n+k-i+1}} = T_{s_{n+k}\cdots s_{n+i+1}}T_{w_{n, k}}$, so we also have 
$$
1_\triv \otimes f_{i, w_{n+1, k}}T_{w_{n+1, k}} = 1_\triv \otimes f_{k, w_{n+1, k}}T_{s_{n+k}\cdots s_{n+i + 1}}T_{w_{n+1, k}}.
$$ $$= 1_\triv \otimes T_{s_{n+k}\cdots s_{n+i + 1}}f_{k, w_{n+1, k}}T_{w_{n+1, k}}$$ $$= 1_\triv \otimes f_{k, w_{n+1, k}}T_{w_{n+1, k}}.$$
For the second equality above, note that while $f_{k, w_{n + 1, k}}$ need not necessarily commute with every $T_w$ for $w \in S_k$, we do have $1_\triv \otimes fT_w = 1_\triv \otimes T_wf = 1_\triv \otimes f$ for any $f \in H_{p, q}(B_{n + 1} \times S_k)$ and any $w \in S_k$, and the same holds after right-multiplication by $T_{w_{n + 1, k}}$.  Right-multiplying by $T_{w_{n + 1, k}}^{-1}$, we obtain $$1_\triv \otimes f_{i, w_{n+1, k}} = 1_\triv \otimes f_{k, w_{n+1, k}}.$$

Combining the results for $a_x$ and $f_{i, x}$, we find that
\[
1_\triv \otimes z_{w_{n+1, k}} = q^k(1_\triv \otimes (1 + (-p)^k)T_1) + (1-q)\left(\sum_{i = 1}^k q^{i-1}\right)1_\triv \otimes f_{k, w_{n+1, k}}.
\]
Since $q$ is a primitive $k^{th}$ root of unity, $q^k = 1$ and $\sum_{i = 1}^k q^{i-1} = 0$. Therefore,
\[
1_\triv \otimes z_{w_{n+1, k}} = 1_\triv \otimes (1 + (-p)^k)T_1,
\]
as desired.\end{proof}

\end{document}